\documentclass[11pt]{amsart}

\usepackage{amssymb,amsmath,amsthm,verbatim}
\usepackage{fullpage}
\usepackage{xcolor}
\usepackage{url}
\usepackage{mathrsfs}
\usepackage[all]{xy}
  \SelectTips{cm}{10}
  \everyxy={<2.5em,0em>:}
\usepackage{tikz}

\usepackage{fancyhdr}
  \pagestyle{fancy}
  \fancyhead[R]{\thepage}
  \fancyhead[L]{}
  \fancyfoot{}
  
  \setlength{\headheight}{14.5pt}

\usepackage{ifpdf}
  \ifpdf
  \else
    
    \newcommand{\href}[2]{#2}
  \fi

\theoremstyle{plain}
  \newtheorem{lemma}[equation]{Lemma}
  \newtheorem{proposition}[equation]{Proposition}
  \newtheorem{theorem}[equation]{Theorem}
  \newtheorem{corollary}[equation]{Corollary}

    \newtheorem{conjecture}[equation]{Conjecture}

\theoremstyle{definition}

\theoremstyle{remark}
  \newtheorem{remark}[equation]{Remark}

\renewcommand{\thesection}{\arabic{section}}
\renewcommand{\theequation}{\thesection.\arabic{equation}}

 \DeclareFontFamily{U}{manual}{}
 \DeclareFontShape{U}{manual}{m}{n}{ <->  manfnt }{}
 \newcommand{\manfntsymbol}[1]{%
    {\fontencoding{U}\fontfamily{manual}\selectfont\symbol{#1}}}

\makeatletter
   \@addtoreset{section}{part}
   \@addtoreset{equation}{section}
   \@addtoreset{footnote}{section}

    {\hspace*{\fill}$\lrcorner$\endgraf\endgroup\end{trivlist}}
 
\makeatother

  \DeclareFontFamily{OT1}{pzc}{}
  \DeclareFontShape{OT1}{pzc}{m}{it}{<-> s * [1.100] pzcmi7t}{}
  \DeclareMathAlphabet{\mathpzc}{OT1}{pzc}{m}{it}

\newif\ifhascomments \hascommentstrue
\ifhascomments
  \newcommand{\jason}[1]{{\color{red}[[\ensuremath{\bigstar\bigstar\bigstar} #1]]}}
  \newcommand{\matt}[1]{{\color{red}[[\ensuremath{\spadesuit\spadesuit\spadesuit} #1]]}}
\else
  \newcommand{\jason}[1]{}
  \newcommand{\matt}[1]{}
\fi


\DeclareMathOperator{\aut}{Aut}
\DeclareMathOperator{\Aut}{\ensuremath{\mathcal{A}\kern-.125em\mathpzc{ut}}}

\newcommand{\bbar}[1]{\overline{#1}}

\newcommand{\CC}{\mathbb C}

\DeclareMathOperator{\Endo}{\ensuremath{\mathcal{E}\kern-.125em\mathpzc{nd}}}

\newcommand{\GG}{\mathbb G}
\DeclareMathOperator{\GL}{GL}

\DeclareMathOperator{\Hom}{\ensuremath{\mathcal{H}\kern-.125em\mathpzc{om}}}
\newcommand{\id}{\mathrm{id}}

\renewcommand{\L}{\mathcal L}

\newcommand{\M}{\mathcal M}
\newcommand{\N}{\mathcal N}

\newcommand{\NN}{\mathbb N}
\renewcommand{\O}{\mathcal O}

\newcommand{\PP}{\mathbb{P}}

\newcommand{\QQ}{\mathbb Q}
\newcommand{\QQbar}{{\bbar{\mathbb Q}}}
\newcommand{\RR}{\mathbb R}

\renewcommand{\setminus}{\smallsetminus}

\DeclareMathOperator{\spec}{Spec}

\newcommand{\ZZ}{\mathbb{Z}}


 \def\ari[#1]{\ar@{^(->)}[#1]}
 \def\are[#1]{\ar[#1]^{\txt{\'et}}}
 \def\areh[#1]{\ar[#1]|{\txt{$H$-eq}}^{\txt{\'et}}}
 \def\ars[#1]{\ar@{->>}[#1]}
 \newcommand{\dplus}{\ar@{}[d]|{\mbox{$\oplus$}}}
 \newcommand{\dtimes}{\ar@{}[d]|{\mbox{$\times$}}}

\usepackage[all]{xy}

\xyoption{all}

\DeclareMathOperator{\OO}{{\mathcal{O}}}
\DeclareMathOperator{\Qbar}{{\overline{\mathbb{Q}}}}

\DeclareMathOperator{\Amp}{Amp}
\DeclareMathOperator{\bir}{Bir}

\DeclareMathOperator{\PGL}{PGL}

\DeclareMathOperator{\Nef}{Nef}
\DeclareMathOperator{\NS}{NS}
\DeclareMathOperator{\Pic}{Pic}

\newcommand{\fp}{{\mathfrak p}}

\definecolor{darkgreen}{rgb}{0.0, 0.2, 0.13}
\definecolor{darkpastelgreen}{rgb}{0.01, 0.75, 0.24}

\begin{document}
\title{On the Medvedev-Scanlon Conjecture for Minimal Threefolds of non-negative Kodaira Dimension}

\author{J.~P.~Bell}
\address{Jason P. Bell\\
Department of Pure Mathematics\\
University of Waterloo\\
Waterloo, ON N2L 3G1\\
CANADA 
}
\email{jpbell@uwaterloo.ca}

\author{D.~Ghioca}
\address{
Dragos Ghioca\\
Department of Mathematics\\
University of British Columbia\\
Vancouver, BC V6T 1Z2\\
Canada
}
\email{dghioca@math.ubc.ca}

\author{Z.~Reichstein}
\address{
Zinovy Reichstein\\
Department of Mathematics\\
University of British Columbia\\
Vancouver, BC V6T 1Z2\\
Canada
}
\email{reichst@math.ubc.ca}

\author{M.~Satriano}
\address{
Matthew Satriano\\
Department of Pure Mathematics\\
University of Waterloo\\
Waterloo, ON N2L 3G1\\
CANADA 
}
\email{msatriano@uwaterloo.ca}

\thanks
{The authors have been partially supported by Discovery Grants from
the National Science and Engineering Board of Canada.}

\begin{abstract} Motivated by work of Zhang from the early `90s, 
Medvedev and Scanlon formulated the following conjecture. 
Let $F$ be an algebraically closed field of characteristic $0$  
and let $X$ be a quasiprojective variety defined over $F$ 
endowed with a dominant rational self-map $\Phi$. Then there 
exists a point $x\in X(F)$ with Zariski dense orbit 
under $\Phi$ if and only if $\Phi$ preserves no nontrivial 
rational fibration, i.e.,~there exists no non-constant rational functions
$f\in F(X)$ such that $\Phi^*(f)=f$. The Medvedev-Scanlon conjecture holds when $F$ is uncountable. The case where 
$F$ is countable (e.g., $F=\Qbar$) is much more difficult; here
the Medvedev-Scanlon conjecture has only been proved in a small number 
of special cases. In this paper we show that
the Medvedev-Scanlon conjecture holds for all varieties 
of positive Kodaira dimension, and explore the case 
of Kodaira dimension $0$. 
Our results are most complete in dimension $3$. 
\end{abstract}

\subjclass{14E05, 14C05, 37F10}

\keywords{algebraic dynamics, orbit closures, rational invariants,  Medvedev-Scanlon conjecture}

\maketitle

\tableofcontents

\section{Introduction}
\label{sec:intro}

Consider a dominant rational self-map $\phi \colon X\dashrightarrow X$ of 
an irreducible variety $X$, defined over a field $k$.
For an integer $n\ge 0$, we will denote by $\phi^n$ 
the $n$-th compositional power of $\phi$. Given a point $x \in X$, we define its orbit under $\phi$ (denoted $\OO_\phi (x)$) to be 
the set of all $\phi^n(x)$ (as $n$ ranges over the non-negative integers) whenever $x$ is not in the indeterminacy locus for $\phi^n$. 

In this paper, we will prove the Medvedev-Scanlon conjecture 
for a large class of projective varieties $X$.  This is a conjecture in arithmetic dynamics that predicts when there is 
a point in $X(\bbar\QQ)$ with dense $\phi$-orbit. 
Certainly, no such $\bbar\QQ$-point can exist 
if $\phi$ \emph{preserves a rational fibration}, 
i.e.~if there is a dominant rational map 
$\pi:X\dasharrow Y$ with $\dim Y>0$ such that $\pi\circ\phi=\pi$. 
The Medvedev-Scanlon conjecture asserts that this necessary condition
is also sufficient.

\begin{conjecture}[{\cite[7.14]{MS}}]
\label{conj:MS}
Let $X$ be an irreducible variety over an algebraically 
closed field $F$ of characteristic 0 
and $\phi \colon X \dasharrow X$ be a dominant rational self-map. 
If $\phi$ does not preserve a rational fibration, then there is 
a point $x\in X(F)$ with Zariski dense forward orbit under $\phi$.
\end{conjecture}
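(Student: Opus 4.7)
My plan is to stratify by Kodaira dimension and handle the strata where the canonical bundle provides enough structure to force $\phi$ to preserve a canonical fibration; the case of negative Kodaira dimension will, I expect, be the main obstacle, and the full conjecture will likely remain out of reach. First, since both the hypothesis (no $\phi$-preserved rational fibration) and the conclusion (existence of a Zariski-dense $\phi$-orbit in $X(F)$) are invariant under replacing $(X,\phi)$ by a birational model, I would reduce to the case where $X$ is a smooth projective variety and, when $\kappa(X)\geq 0$, a minimal model: resolve indeterminacies of $\phi$, compactify, and run a $\phi$-equivariant MMP.

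If $\kappa(X)\geq 1$, my plan is to show that the hypothesis is in fact vacuous: every dominant rational self-map of $X$ preserves a canonical rational fibration, namely the Iitaka fibration $\pi\colon X\dasharrow I(X)$ of dimension $\kappa(X)$. Functoriality of the canonical ring lets $\phi$ descend to a dominant rational self-map $\phi_I\colon I(X)\dasharrow I(X)$. Iitaka's $C_{n,m}$-type conjectures (known in the low-dimensional ranges relevant to the present paper) imply that $I(X)$ is birational to a variety of general type, and any dominant rational self-map of such a variety is classically birational and of finite order. Thus $\phi_I^n=\id$ for some $n\geq 1$, and taking $\langle\phi_I\rangle$-invariants in $F(I(X))\subset F(X)$ produces a subfield of transcendence degree $\kappa(X)>0$ that is fixed by $\phi^*$; this subfield is the function field of a $\phi$-preserved rational fibration, contradicting the hypothesis. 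So Conjecture \ref{conj:MS} holds vacuously once $\kappa(X)\geq 1$.

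The case $\kappa(X)=0$ is substantially more delicate, since $K_X\equiv 0$ and no Iitaka-type fibration is available. Here I would invoke the structure theory of minimal varieties with trivial canonical class: after a finite étale cover, a Beauville--Bogomolov-type decomposition (available in dimensions $\leq 3$) presents $X$ as a product of abelian varieties and strict Calabi--Yau factors. On an abelian factor one produces Zariski-dense orbits in $A(\Qbar)$ via Kummer-type density statements together with the arithmetic of the endomorphism ring; on a Calabi--Yau factor one analyses $\bir(X)$ through its actions on $H^2$, on rational curves, and on the pluricanonical system. The main technical obstacle I anticipate within this regime is the Calabi--Yau threefold case, where $\bir(X)$ can be infinite and highly nonlinear, and where density of a single $\Qbar$-orbit must be established by genuinely arithmetic means, since over a countable field there is no Baire-category shortcut. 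Finally, the case $\kappa(X)=-\infty$ of uniruled varieties lies entirely outside this Kodaira-stratified framework and is, in my estimation, the principal reason Conjecture \ref{conj:MS} remains open in full generality.
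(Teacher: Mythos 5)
The statement you are addressing is the Medvedev--Scanlon conjecture itself, which the paper does not prove (nor does anyone): the paper only establishes partial results, namely that the conjecture is vacuously true for birational self-maps when $\kappa(X)>0$ (Theorem \ref{thm:MS-Kod>0}), a conditional reduction in Kodaira dimension $0$ to products of abelian, Calabi--Yau and hyperk\"ahler factors (Theorems \ref{thm:MS-reduction-nfolds} and \ref{thm:MS-reduction-3folds}), and the Calabi--Yau threefold case under hypotheses on $c_2$ (Theorem \ref{thm:MS-CY3}). Your proposal is likewise a programme rather than a proof, as you yourself concede for $\kappa=-\infty$ and for Calabi--Yau threefolds, so it cannot be accepted as a proof of the statement. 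Its broad shape (stratify by Kodaira dimension; Iitaka fibration for $\kappa\geq 1$; Beauville--Bogomolov decomposition after an \'etale cover for $\kappa=0$) does coincide with the paper's strategy.

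Beyond that, the one place where you claim a complete argument contains a genuine error. You assert that the base $I(X)$ of the Iitaka fibration is birational to a variety of general type (via $C_{n,m}$-type statements) and deduce finiteness of the induced self-map from finiteness of $\bir$ of a general type variety. This is false as stated: the Iitaka base need not be of general type --- for an elliptic surface $X\to\PP^1$ with $\kappa(X)=1$ the base is $\PP^1$ (only Campana's \emph{orbifold} base is of general type, and your finiteness argument is not justified in that setting). The paper's proof of Theorem \ref{thm:MS-Kod>0} avoids this entirely by citing the Deligne--Ueno theorem \cite[Thm 14.10]{ueno}: the pluricanonical representation $\rho_m\colon\bir(X)\to\GL(H^0(mK_X))$ has finite image, so after replacing $\phi$ by an iterate (harmless by Lemma \ref{equivalent conj:MS}) the induced map on the Iitaka image is the identity. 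You should also note that your $\kappa=0$ sketch glosses over the points where the paper does real work: descending/lifting the birational self-map along the minimal split cover over $\QQbar$ (Proposition \ref{prop:BB-decomp-over-Qbar} and Proposition \ref{prop:BBD-lifting}, using that $\phi$ is a pseudo-automorphism), the passage between countable and uncountable base fields via the dynamical Rosenlicht theorem (Lemma \ref{l:fib-over-C}), the $E\times S$ case, and the $c_2$-based analysis for Calabi--Yau threefolds; moreover the conjecture concerns arbitrary dominant rational self-maps, while your Iitaka-step and equivariant-MMP reductions are really tailored to birational ones, which is exactly the restriction the paper imposes.
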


In the case, where $F$ is uncountable, Conjecture~\ref{conj:MS}
was proved earlier by Amerik and Campana~\cite[Theorem~4.1]{A-C} 
(and under the stronger hypothesis that $\phi$ is an automorphism 
of $X$ independently by Bell, Rogalski 
and Sierra~\cite[Theorem~1.2]{Dixmier}). 
Conjecture~\ref{conj:MS} was, in fact, motivated by this theorem and 
by an older conjecture of Zhang \cite[Conjecture~4.1.6]{Zhang} 
about Zariski dense orbits for polarizable endomorphisms.

For the rest of the introduction we will assume that $F$ is a countable 
algebraically closed field of characteristic $0$ (e.g., $F = \bar{\QQ}$).
Here the Medvedev-Scanlon conjecture has only been proved in 
a few special cases, using subtle diophantine techniques:

\smallskip
(1) Medvedev and Scanlon \cite[Theorem~7.16]{MS} established
Conjecture~\ref{conj:MS}  for endomorphisms $\phi$ of $X=\mathbb{A}^m$ 
of the form $\phi(x_1,\dots, x_m)=(f_1(x_1),\dots, f_m(x_m))$, 
where $f_1,\dots, f_m\in F[x]$. 
Their proof combines techniques from model theory, 
number theory and polynomial decomposition theory 
to obtain a complete description of all 
periodic subvarieties. 

\smallskip
(2) In the case where $X$ is an abelian variety and $\phi \colon X \to X$ 
is dominant self-map,  Conjecture~\ref{conj:MS} was proved 
by Ghioca and Scanlon~\cite{MS-ab-var}. The proof uses an explicit description
of endomorphisms of an abelian variety and relies on 
the Mordell-Lang conjecture, due to  Faltings~\cite{Faltings}. 

\smallskip
(3) In the case where $\dim(X) \leq 2$ and $\phi \colon X \dasharrow X$
is a birational isomorphism, Conjecture~\ref{conj:MS} was established 
by Xie~\cite{xie}. We remark that in~\cite[Theorem~1.4]{xie}, 
this result is stated under the additional assumption that
the first dynamical degree of $\phi$ is greater than $1$; 
however, the same proof goes through without this assumption.
We will not use~\cite[Theorem~1.4]{xie} in this paper, but we will
appeal to the case of regular automorphisms of surfaces, which was 
settled earlier in~\cite[Theorem~1.3]{BGT-2}. 
These results  are proved by $p$-adic 
techniques, in particular, the so-called $p$-adic arc lemma. 
For details on the $p$-adic arc lemma and its applications 
we refer the reader to \cite[Chapter~4]{book}.  


\smallskip
(4) Xie \cite[Theorem~1.1]{xie-2} recently proved 
Conjecture~\ref{conj:MS} for all polynomial endomorphisms 
of $\mathbb{A}^2$. The proof relies on valuation-theoretic techniques.

\smallskip
In this paper we will explore Conjecture~\ref{conj:MS} in
the case where $\phi \colon X \dasharrow X$ is a birational automorphism
and $\dim(X) \geq 3$ by 
using techniques of higher-dimensional algebraic geometry. 
Our first main result settles the Medvedev-Scanlon conjecture 
for birational self-maps of varieties of positive Kodaira dimension.

\begin{theorem}
\label{thm:MS-Kod>0}
If $X$ is an irreducible projective variety of Kodaira dimension $\kappa(X)>0$ 
defined over a field of characteristic $0$ and $\phi \colon X \dasharrow X$ is a birational self-map, 
then $\phi$ preserves a rational fibration.
In particular, the Medvedev-Scanlon Conjecture \ref{conj:MS} is 
vacuously true in this case.
\end{theorem}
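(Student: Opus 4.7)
The plan is to descend the problem to the base of the Iitaka fibration. After replacing $X$ by a smooth projective birational model, let $\pi \colon X \dashrightarrow Y$ denote the Iitaka fibration of $X$; by hypothesis $\dim Y = \kappa(X) > 0$. Because the pluricanonical ring $\bigoplus_{m\geq 0} H^0(X, mK_X)$ is a canonical invariant of $X$, every birational self-map $\phi$ of $X$ descends to a birational self-map $\phi_Y \colon Y \dashrightarrow Y$ with $\pi\circ\phi = \phi_Y\circ\pi$. Once we produce a nonconstant $h \in F(Y)$ with $\phi_Y^{*} h = h$, the function $\pi^{*} h \in F(X)$ is nonconstant and satisfies $\phi^{*}(\pi^{*}h) = \pi^{*}h$, so $\phi$ preserves the rational fibration defined by $\pi^{*}h$.

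The next step is the further reduction that it suffices to show $\phi_Y$ has finite order. Indeed, if $\phi_Y^n = \id$, then $F(Y)$ is a finite (degree dividing $n$) extension of the fixed field $F(Y)^{\langle \phi_Y\rangle}$, so the latter has full transcendence degree $\dim Y > 0$ over $F$ and in particular contains a nonconstant $h$. Thus the entire theorem reduces to a finiteness statement for the induced action on the Iitaka base.

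Finally, to establish finite order of $\phi_Y$, the crux is that $Y$ can be chosen to be of (log) general type. This is where the heavy machinery of higher-dimensional birational geometry enters: by the canonical bundle formula of Kawamata and Fujino--Mori together with the finite generation of the canonical ring (Birkar--Cascini--Hacon--McKernan), one obtains on $Y$ an effective $\QQ$-divisor $B_Y$ and a nef $\QQ$-divisor $M_Y$, both canonically determined by $\pi$, such that $K_Y + B_Y + M_Y$ is big; canonicity of the construction forces $\phi_Y$ to preserve the pair $(Y, B_Y + M_Y)$. Hence $\phi_Y$ lies in the birational automorphism group of a log general-type pair, which is finite by a classical theorem (via the boundedness of the pluricanonical representation). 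The main obstacle, and the place where real geometry is used, is precisely this chain of inputs ensuring that the Iitaka base carries a canonically $\phi_Y$-stable log general-type structure; once this is in hand, the remainder of the argument is formal.
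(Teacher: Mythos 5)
Your overall skeleton is the same as the paper's: make the Iitaka fibration $\phi$-equivariant, show the induced map on the base has finite order, and then pull back a nonconstant invariant function (your fixed-field argument plays the role of the paper's Lemma \ref{equivalent conj:MS}, which uses symmetric functions to pass from an iterate of $\phi$ back to $\phi$). The divergence, and the gap, is in how you establish finite order of $\phi_Y$. The theorem you invoke is not the classical one you suggest: for the pair $(Y, B_Y+M_Y)$ coming out of the canonical bundle formula, the moduli part $M_Y$ is only nef on a sufficiently high birational model (it is a b-divisor class, determined only up to $\QQ$-linear equivalence), so it is not literally true that ``canonicity forces $\phi_Y$ to preserve the pair''; $\phi_Y$ preserves the discriminant $B_Y$ and the \emph{class} of $M_Y$, and finiteness of the subgroup of $\bir(Y)$ preserving such data with $K_Y+B_Y+M_Y$ big is not Matsumura/Iitaka-style classical finiteness for (log) general type --- it is essentially the finiteness of log pluricanonical representations (Fujino, Hacon--Xu) or requires passing to a log canonical model with an effective representative of $M_Y$ and then arguing around the loss of equivariance. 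As written, the crux of your argument rests on a statement that needs substantial additional work, on top of already heavy inputs (canonical bundle formula, BCHM).

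None of this machinery is needed. The paper's route is much lighter: after reducing to $F=\CC$ (Lemma \ref{l:fib-over-C}) and to $X$ smooth, the Iitaka map sends $X$ into $\PP H^0(X,mK_X)$ equivariantly, and the Deligne--Ueno theorem (\cite[Thm 14.10]{ueno}) says the pluricanonical representation $\rho_m\colon\bir(X)\to\GL(H^0(mK_X))$ has \emph{finite image} for any $X$, with no general-type hypothesis on the base. Hence some iterate of $\phi$ acts as the identity on the ambient projective space, so the induced map on the Iitaka base is the identity, and one concludes directly. If you want to salvage your route, replace the appeal to a ``classical theorem'' for the pair $(Y,B_Y+M_Y)$ either by this direct application of Deligne--Ueno to $H^0(X,mK_X)$, or by an explicit citation of the finiteness of (log) pluricanonical representations together with a careful treatment of the b-divisor nature of $M_Y$; also note that quoting Ueno requires first reducing to the complex field, a step your write-up omits.
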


Our next result shows that if $X$ is a smooth minimal 
model with $\kappa(X) = 0$, then assuming standard conjectures in 
the minimal model program, Conjecture~\ref{conj:MS} can be reduced 
to products of special kinds of varieties: Calabi-Yau, hyperk\"ahler, 
and abelian varieties. 

Recall that a smooth projective variety $X$ over $\QQbar$ 
is called \emph{hyperk\"ahler} 
if its complex analytification is simply connected 
and $H^0(\Omega^2_X)$ is spanned by a symplectic form. 
In dimension 2, hyperk\"ahler varieties are nothing more than K3 surfaces.

We use the convention that a smooth projective variety 
of dimension $\geq 3$ defined over $\QQbar$ is \emph{Calabi-Yau} 
if the complex analytification $X_{\CC}$ is simply connected, 
$K_X\simeq\O_X$, and $H^p(\O_X)=0$ for $0<p<\dim X$. 
Since we are working over $\QQbar$, by the symmetry of the Hodge 
diamond, this latter condition is equivalent 
to requiring $H^0(\Omega^p_X)=0$ for $0<p<\dim X$.

Our next result relies on the abundance conjecture, which is known for curves, surfaces, and threefolds. We state it in the form that we need, although the conjecture itself is more general.

\begin{conjecture}[{Abundance \cite[Corollary 3.12]{kollar-mori}}]
\label{conj:abundance}
If $X$ is a smooth projective minimal variety of Kodaira dimension 0, then $K_X$ is numerically trivial.
\end{conjecture}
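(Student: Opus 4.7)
The plan is to deduce numerical triviality of $K_X$ from the semi-ampleness form of abundance, namely that $K_X$ nef implies $K_X$ semi-ample. Granted this, some $|mK_X|$ is base-point free and defines a morphism $X\to Y$ with $\dim Y=\kappa(X)=0$; hence $Y$ is a point, $mK_X\sim 0$, and in particular $K_X\equiv 0$. Alternatively, one can attempt the slightly weaker goal directly: since $\kappa(X)=0$ and $K_X$ is nef, the top self-intersection $K_X^{\dim X}$ vanishes (else $K_X$ would be big and $\kappa(X)=\dim X$), and the task reduces to upgrading this vanishing to $K_X\cdot C=0$ for every curve $C\subset X$.

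My approach would be dimension-by-dimension, matching the structure of what is known unconditionally. In dimension $1$, $\kappa(X)=0$ forces $X$ to be elliptic, and $K_X\sim 0$ directly. In dimension $2$, the Enriques--Kodaira classification exhibits the smooth minimal surfaces of Kodaira dimension $0$ as abelian surfaces, K3 surfaces, Enriques surfaces, or bielliptic surfaces, and in each class a small multiple of $K_X$ is trivial by inspection. In dimension $3$, the result, due to Miyaoka and Kawamata, combines Miyaoka's generic semipositivity of $\Omega^1_X$, Kawamata's nonvanishing and base-point-free theorems, and cone and contraction techniques from the minimal model program; the heart of the argument is that if $K_X\not\equiv 0$ one can produce a covering family of rational curves contradicting minimality together with $\kappa(X)\geq 0$.

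The main obstacle is the same in every dimension: upgrading vanishing of a top intersection number to pointwise numerical vanishing. In high dimension this is the abundance conjecture (equivalently, the conjectured equality of numerical and Iitaka Kodaira dimensions), which is open in dimension $\geq 4$. Thus the strategy above settles the statement unconditionally in dimensions $\leq 3$, which suffices for the threefold applications in this paper, but a general proof would require genuinely new input beyond currently available methods.
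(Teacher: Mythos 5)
This statement is labeled a conjecture in the paper, and the paper gives no proof of it: it is assumed as a hypothesis in Theorem \ref{thm:MS-reduction-nfolds} (with the authors' own remark comparing it to assuming GRH), and the only case invoked unconditionally is the threefold case, for which the paper simply cites Kawamata's abundance theorem for minimal threefolds in the proof of Theorem \ref{thm:MS-reduction-3folds}. Your proposal correctly recognizes this state of affairs rather than pretending to close an open problem, and the mathematics you sketch is sound as far as it goes: the reduction from the semi-ample form of abundance to numerical triviality via the Iitaka map (a free $|mK_X|$ with $\kappa(X)=0$ maps to a point, so $mK_X\sim 0$), the curve and surface cases via the Enriques--Kodaira classification, and the threefold case via Miyaoka's generic semipositivity together with Kawamata's work. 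You also correctly identify the genuine obstruction in dimension $\geq 4$: upgrading $\kappa(X)=0$ (equivalently, vanishing of sections and of the top self-intersection of the nef divisor $K_X$) to $K_X\cdot C=0$ for every curve is precisely the open content of abundance, i.e.\ the conjectured agreement of the numerical and Iitaka dimensions. So there is no gap in your reasoning relative to what is provable; just be aware that, like the paper, you have proved the statement only in dimension $\leq 3$ (which is all the paper needs unconditionally), while the general statement remains an assumption, not a theorem.
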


\begin{remark}
For readers more familiar with diophantine geometry and less familiar with the techniques of the minimal model program, we emphasize that assuming the abundance conjecture is akin to assuming the generalized Riemann hypothesis. Although the result is not yet known in higher dimension, it is largely expected that the conjecture is true.
\end{remark}

\begin{theorem}
\label{thm:MS-reduction-nfolds}
Fix an integer $n \geq 1$.  Assuming the Abundance Conjecture 
\ref{conj:abundance}, the Medvedev-Scanlon 
Conjecture \ref{conj:MS} holds for birational self-maps of smooth projective minimal 
$n$-folds over $\bbar\QQ$ of Kodaira dimension 0 if and only if 
it holds for those $n$-folds of the form 
$A\times \prod_i Y_i\times \prod_j Z_j$, 
where $A$ is an abelian variety, the $Y_i$ are Calabi-Yau, 
and the $Z_j$ are hyperk\"ahler.
\end{theorem}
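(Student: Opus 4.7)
The ``only if'' direction is immediate: any variety of the form $A\times\prod_i Y_i\times\prod_j Z_j$ is itself a smooth projective minimal $n$-fold of Kodaira dimension $0$. For the non-trivial ``if'' direction, let $X$ be a smooth projective minimal $n$-fold over $\QQbar$ with $\kappa(X)=0$, and let $\phi\colon X\dashrightarrow X$ be a birational self-map preserving no rational fibration. By Conjecture \ref{conj:abundance}, $K_X$ is numerically trivial, so the Beauville--Bogomolov decomposition theorem supplies a finite \'etale cover $\pi\colon\tilde X\to X$ with $\tilde X\cong A\times\prod_i Y_i\times\prod_j Z_j$. Since the non-abelian factors are simply connected by our running conventions, every connected finite \'etale cover of $\tilde X$ takes the form $A'\times\prod_i Y_i\times\prod_j Z_j$ with $A'\to A$ an isogeny; in particular, the Galois closure of $\pi$ remains of product form, so we may replace $\tilde X$ by it and assume $\pi$ is Galois with group $G$.

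The next step is to lift an iterate of $\phi$ to $\tilde X$. Because $X$ is smooth, the indeterminacy locus of $\phi$ has codimension $\geq 2$, and hence by purity $\phi$ induces an outer automorphism $\phi_*$ of the topologically finitely generated (indeed, virtually abelian) group $\pi_1(X)$. Since $\pi_1(X)$ has only finitely many finite-index normal subgroups of a given index, some iterate $\phi^N_*$ preserves the normal subgroup $\pi_1(\tilde X)\subset\pi_1(X)$; the standard lifting criterion for covering spaces then produces a birational self-map $\tilde\phi\colon\tilde X\dashrightarrow\tilde X$ lifting $\phi^N$.

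I then verify that $\tilde\phi$ preserves no rational fibration on $\tilde X$. Suppose for contradiction that $\tilde f\in\QQbar(\tilde X)$ is a non-constant $\tilde\phi^*$-invariant. For each $g\in G$, the composite $g\circ\tilde\phi$ is also a lift of $\phi^N$ along $\pi$, so it equals $\tilde\phi\circ g'$ for some $g'\in G$; passing to pullbacks gives $\tilde\phi^*(g^*\tilde f)=(g')^*\tilde f$, showing that $\tilde\phi^*$ permutes the finite $G$-orbit $\{g^*\tilde f\mid g\in G\}$. The elementary symmetric functions of this orbit are therefore simultaneously $G$-invariant and $\tilde\phi^*$-invariant, and hence lie in $\QQbar(X)^{\phi^N}$; at least one is non-constant because $\tilde f$ is. Applying Galois theory (Artin's theorem) to the finite-order action of $\phi^*$ on the field extension $\QQbar(X)^{\phi^N}/\QQbar$ yields a non-constant element of $\QQbar(X)^{\phi}$, contradicting the assumption that $\phi$ preserves no rational fibration.

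By the hypothesis of the theorem applied to the product-type variety $\tilde X$, there exists $\tilde x\in\tilde X(\QQbar)$ with Zariski dense $\tilde\phi$-orbit; then $x=\pi(\tilde x)\in X(\QQbar)$ has dense $\phi^N$-orbit (the image under the surjective $\pi$ of a dense set is dense), and thus dense $\phi$-orbit. The principal obstacles I foresee are the lifting step, which depends on purity of $\pi_1$ for smooth varieties in characteristic zero together with finiteness of bounded-index subgroups of $\pi_1(X)$, and the descent of invariant fibrations, where one must manage the fact that the $G$- and $\tilde\phi^*$-actions on $\QQbar(\tilde X)$ do not commute but only satisfy the semi-commutation $g\circ\tilde\phi=\tilde\phi\circ g'$.
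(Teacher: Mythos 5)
Your proposal follows the same overall architecture as the paper --- pass to the Beauville--Bogomolov split cover, lift the dynamics, check that no invariant fibration survives upstairs, apply the hypothesis to the product, and push the dense orbit back down --- but your lifting step is genuinely different. The paper (Proposition \ref{prop:BBD-lifting}) lifts $\phi$ itself, by extending the pulled-back $G$-torsor across the codimension-two indeterminacy locus and then showing, via minimality of the split cover and Proposition~3 of \cite{BB-decomp}, that the extended torsor is again split; you instead lift only an iterate $\phi^N$, using birational invariance of $\pi_1$, finiteness of the set of normal subgroups of index $|G|$ in the finitely generated group $\pi_1(X)$, and the covering-space lifting criterion. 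That is a simpler mechanism, and passing to an iterate is harmless here (compare Lemma \ref{equivalent conj:MS}); moreover, your symmetric-function/Artin argument that $\QQbar(\widetilde{X})^{\widetilde{\phi}}=\QQbar$ makes explicit a verification that the paper leaves implicit when it invokes Lemma \ref{l:MS-finite-maps}(c). (Your Galois-closure step is unnecessary if you use the cover of Proposition \ref{prop:BB-decomp-over-Qbar}, which is already a $G$-torsor; and you should record that $\widetilde{\phi}$ is birational, algebraic, and defined over $\QQbar$ --- easy, e.g.\ by a degree count and by realizing the lift as a connected component of the pullback of $\pi$ along $\phi^N\circ\pi$.)

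Two genuine gaps remain, both fixable. First, you apply the Beauville--Bogomolov decomposition directly over $\QQbar$. Since the theorem's hypothesis is only assumed for products of abelian, Calabi--Yau and hyperk\"ahler varieties \emph{over} $\QQbar$, you need the cover, its product structure, and the Calabi--Yau/hyperk\"ahler properties of the factors (including simple connectedness of their analytifications) to descend to $\QQbar$; the decomposition theorem itself is a statement over $\CC$, and the paper proves the $\QQbar$-version in Proposition \ref{prop:BB-decomp-over-Qbar} by a spreading-out and specialization argument precisely because no reference exists. Second, your final step is too quick: density of the image $\pi\bigl(\OO_{\widetilde{\phi}}(\widetilde{x})\bigr)$ does not by itself give a dense $\phi^N$-orbit of $x=\pi(\widetilde{x})$, because by the paper's definition the orbit of $x$ only contains $\phi^{Nk}(x)$ for those $k$ with $x$ outside the indeterminacy locus of $\phi^{Nk}$. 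You must check that regularity descends: if $\widetilde{\phi}^{\,k}$ is defined at $\widetilde{x}$, then $\phi^{Nk}$ is defined at $x$. This is exactly Lemma \ref{l:MS-finite-maps}(b): $\pi\circ\widetilde{\phi}^{\,k}$ is a $G$-invariant rational map regular at $\widetilde{x}$, hence descends along the torsor to a rational map regular at $x$ which coincides with $\phi^{Nk}$. Since you have arranged $\pi$ to be Galois, that argument is available verbatim and closes the gap.
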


In the case of threefolds, we unconditionally reduce to the case 
of Calabi-Yau varieties. 

\begin{theorem}
\label{thm:MS-reduction-3folds}
The Medvedev-Scanlon Conjecture \ref{conj:MS} holds for birational self-maps of smooth projective minimal threefolds over $\bbar\QQ$ of Kodaira dimension $0$ if and only if it holds for smooth Calabi-Yau threefolds.
\end{theorem}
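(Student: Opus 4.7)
The ``if'' direction is immediate since every smooth Calabi-Yau threefold is itself a smooth projective minimal threefold of Kodaira dimension $0$. For the converse, the plan is to invoke Theorem~\ref{thm:MS-reduction-nfolds} with $n=3$, which is unconditional here because abundance is known in dimension~$3$, thereby reducing Medvedev-Scanlon to the case of threefolds of the form $A\times \prod_i Y_i\times \prod_j Z_j$. The dimensional constraints---$\dim Y_i\geq 3$, $\dim Z_j$ a positive even integer, total dimension $3$---leave exactly three possibilities: (a) an abelian threefold, settled by Ghioca-Scanlon~\cite{MS-ab-var}; (b) a product $E\times S$ of an elliptic curve with a K3 surface; and (c) a smooth Calabi-Yau threefold, which is our hypothesis. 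Thus all content lies in case (b).

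For a birational self-map $\phi$ of $E\times S$, the first step is to show $\phi$ takes the form $\phi(e,s)=(f(e),g_0(s))$ with $f\in\aut(E)$ and $g_0\in\aut(S)$. Indeed, $\Alb(S)=0$ forces $\Alb(E\times S)=E$, and Albanese functoriality (together with the fact that rational maps from smooth projective varieties to abelian varieties extend to morphisms) gives $\pi_E\circ\phi=f\circ\pi_E$ for some $f\in\aut(E)$; the fiberwise restrictions are then birational, hence biregular (K3 being minimal), self-maps of $S$; and the discreteness of $\aut(S)$ (from $H^0(T_S)=0$) forces these restrictions to form a constant family. I would then split into subcases: if $f$ has finite order, $\phi$ preserves the fibration $E\times S\to E/\langle f\rangle$; and if $g_0$ preserves a rational fibration $h\colon S\dashrightarrow B$, then $h\circ\pi_S$ is $\phi$-invariant. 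Either way the conjecture holds vacuously. The one substantive subcase is that $f$ has infinite order and $g_0$ preserves no rational fibration; here Medvedev-Scanlon for regular automorphisms of surfaces~\cite[Theorem~1.3]{BGT-2} applied to $g_0$ furnishes $s_0\in S(\bbar\QQ)$ with dense $g_0$-orbit, and a non-torsion translation among the powers of $f$ furnishes $e_0\in E(\bbar\QQ)$ with dense $f$-orbit.

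The hard part will be to verify that $(e_0,s_0)$ has Zariski dense $\phi$-orbit, which I plan to show by contradiction. If the orbit closure $V\subsetneq E\times S$, both projections of $V$ are surjective, so $\dim V=2$ and the generic fiber $V_e\subset S$ of $V\to E$ is a curve satisfying $g_0(V_e)=V_{f(e)}$. Replacing $V$ by an irreducible component and $\phi$ by a suitable power (harmless since no positive power of $g_0$ preserves a rational fibration either), one may assume $V_e$ is connected, so that $\beta=[V_e]\in\NS(S)$ is a well-defined $g_0^*$-invariant class. Because $V$ surjects onto $S$, distinct generic fibers $V_{e_1}, V_{e_2}$ share no components (else $V=E\times C$ for some curve $C\subsetneq S$ would fail to project onto $S$), so $\beta^2=V_{e_1}\cdot V_{e_2}\geq 0$. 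On the other hand, since $g_0$ preserves no rational fibration, Cantat's classification of K3 automorphisms places $g_0$ in the hyperbolic regime, and the spectral decomposition of $g_0^*$ on $\NS(S)_\RR$ confines every non-zero $g_0^*$-invariant class to a negative-definite subspace (an isotropic invariant class would yield a $g_0$-invariant elliptic fibration), forcing $\beta^2<0$. This contradiction delivers the desired dense orbit and completes the proof.
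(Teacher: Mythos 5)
Your overall architecture coincides with the paper's: the ``if'' direction is trivial, and the ``only if'' direction runs through Theorem~\ref{thm:MS-reduction-nfolds} (unconditional in dimension $3$ because abundance is known there), leaving the three cases abelian threefold (settled by \cite{MS-ab-var}), $E\times S$ with $S$ a K3, and Calabi--Yau threefold (the hypothesis). Where you genuinely diverge is case (b). The paper first proves your product-structure claim (its Lemma~\ref{lem.ExS}, via Fujiki--Lieberman; your $H^0(T_S)=0$ argument is the same idea for K3s), but then fibers $E\times S$ over the \emph{surface} factor: it applies \cite[Theorem~1.3]{BGT-2} to get a dense orbit on $S$ and lifts it through the elliptic-curve fibration using Lemma~\ref{lem:Cantat}(b), whose only input is the finiteness of $\phi$-invariant divisors from \cite{Dixmier}. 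That route is soft: it needs no case split on the order of $\phi_E$, no analysis of the $E$-factor, and no K3-specific spectral theory, and it works verbatim for any surface with trivial Albanese and $\kappa\ge 0$ after passing to the minimal model. Your route instead fibers over $E$ and kills a $2$-dimensional orbit closure by playing a non-negative self-intersection of the fiber class against the Hodge-index constraint on $g_0^*$-invariant classes for a hyperbolic K3 automorphism. That is a legitimate alternative, but it carries extra baggage, and one step of it is not yet closed.

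The gap is in the sentence ``since $g_0$ preserves no rational fibration, \dots $g_0$ is in the hyperbolic regime (an isotropic invariant class would yield a $g_0$-invariant elliptic fibration).'' In this paper ``preserves a rational fibration'' means $\pi\circ g_0=\pi$, i.e.\ the induced action on the base is \emph{trivial} (equivalently $\bbar\QQ(S)^{g_0}\ne\bbar\QQ$). A parabolic $g_0$ gives a genus-one fibration $\pi\colon S\to\PP^1$ whose fibers are permuted by $g_0$, with an a priori nontrivial induced automorphism of $\PP^1$; that is not, on its face, a preserved fibration in the required sense, so your parenthetical does not exclude the parabolic case, and if it occurred your contradiction ($\beta^2<0$) would evaporate, since the fiber class is an invariant isotropic class. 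The case can be excluded, but it needs an argument: the finitely many points of $\PP^1$ under singular fibers are permuted, a genus-one fibered K3 has at least three singular fibers (Euler number $24$ plus the standard non-isotriviality/monodromy analysis), so the induced automorphism of $\PP^1$ has finite order; then some iterate of $g_0$ satisfies $\pi\circ g_0^m=\pi$, and the iterate reduction (Lemma~\ref{equivalent conj:MS}) shows $g_0$ itself preserves a rational fibration, contrary to hypothesis. (The elliptic case is fine: finite order on $\NS(S)$ forces $g_0$ to have finite order, hence an invariant function field of transcendence degree $2$.) You should also tighten the justification of $\beta^2\ge 0$: two particular generic fibers sharing a component does not force $V=E\times C$; the clean statement is that a fixed irreducible curve $C$ lying in infinitely many fibers forces $E\times C\subseteq V$, hence $V=E\times C$, contradicting surjectivity onto $S$; consequently a generic fiber contains no prescribed curve $C$, so $\beta$ is nef and $\beta^2\ge0$. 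With those two repairs your argument goes through, but you may find the paper's Lemma~\ref{lem:Cantat} route shorter and less delicate.
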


Finally, we handle the case of Calabi-Yau threefolds, contingent 
on conjectures in the minimal model program. Via the intersection 
product, the second Chern class $c_2(X)$ defines a linear form on 
the nef cone $\Nef(X)$. Miyaoka \cite{Miyaoka} 
shows that this linear form always assumes non-negative values 
on the nef cone. We separately consider the cases where $c_2(X)$ 
is strictly positive and where it is not.

\begin{theorem}
\label{thm:MS-CY3}
Let $X$ be a smooth projective Calabi-Yau threefold over $\bbar\QQ$. Then the Medvedev-Scanlon Conjecture \ref{conj:MS} holds for all (regular) 
automorphisms $\phi \colon X \to X$ if either:
\begin{enumerate}
\item \label{item:c2>0_semiample} $c_2(X)$ is positive on $\Nef(X)$, or
\item \label{item:c2=0_semiample} there is a semi-ample divisor 
$D\neq0$ on $X$ such that $c_2(X)\cdot D=0$.
\end{enumerate}
\end{theorem}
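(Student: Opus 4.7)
The plan is to show that in each of the two cases, $\phi$ or some iterate $\phi^n$ preserves a nontrivial rational fibration, so that Conjecture \ref{conj:MS} is vacuously true for $\phi$. The common observation is that the linear functional $c_2(X)\cdot(-)$ on $N^1(X)_\RR$ is $\phi^*$-invariant: by functoriality of Chern classes under the automorphism $\phi$ we have $\phi^* c_2(X)=c_2(X)$, and since $\phi$ preserves intersection numbers,
\[
c_2(X)\cdot \phi^* E \;=\; \phi^* c_2(X)\cdot\phi^* E \;=\; c_2(X)\cdot E
\]
for every $E\in N^1(X)_\RR$. Consequently, both the hyperplane $c_2(X)^\perp$ and each level slice $\{E\in\Nef(X):c_2(X)\cdot E=t\}$ are $\phi^*$-invariant.

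In part (\ref{item:c2>0_semiample}), the positivity hypothesis makes the slice $S:=\{E\in\Nef(X):c_2(X)\cdot E=1\}$ compact. Since $\phi^*$ preserves $S$ and the lattice $N^1(X)_\ZZ$, and $S\cap N^1(X)_\ZZ$ is finite, the induced action of $\phi^*$ on $N^1(X)$ has finite order. I then invoke Lieberman's theorem: the kernel of $\Aut(X)\to\GL(N^1(X))$ is an extension of the connected component $\Aut(X)^0$ by a finite group. For a strict Calabi-Yau threefold one has $T_X\simeq \Omega^2_X$ and hence $H^0(T_X)\cong H^0(\Omega^2_X)=0$, so $\Aut(X)^0$ is trivial and the kernel is finite. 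Combining these two finiteness statements, $\phi$ itself has finite order in $\Aut(X)$. Every orbit is then finite, and the invariant field $\QQbar(X)^{\phi}$ has transcendence degree $\dim X=3$, yielding many $\phi$-invariant rational fibrations.

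In part (\ref{item:c2=0_semiample}), the given semi-ample $D\neq 0$ with $c_2(X)\cdot D=0$ defines an Iitaka fibration $f\colon X\to Y$ with $\dim Y=\kappa(D)\geq 1$. For every $k$, the class $(\phi^*)^k[D]$ is again semi-ample and lies in $c_2(X)^\perp$. The plan is to show that the orbit $\{(\phi^*)^k[D]\}_{k\geq 0}$ is finite modulo positive scaling: once this is established, some power $\phi^n$ fixes $[D]$ up to proportionality and hence preserves $f$, giving the desired $\phi$-invariant fibration. One establishes the finiteness by appealing to rational polyhedrality of $\overline{\mathrm{SAmp}(X)}\cap c_2(X)^\perp$, or equivalently to the finiteness of Iitaka fibrations arising from semi-ample classes with $c_2(X)\cdot(-)=0$; both are accessible via the cone theorem and boundedness results from the minimal model program applied to Calabi-Yau threefolds.

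The main obstacle is part (\ref{item:c2=0_semiample}): controlling the $\phi^*$-orbit of $[D]$ inside the non-compact locus $c_2(X)^\perp\cap\Nef(X)$ is more delicate than the compactness argument available in part (\ref{item:c2>0_semiample}), where the only nontrivial ingredient is the black-box application of Lieberman's theorem combined with the Hodge-theoretic vanishing $H^0(\Omega^2_X)=0$ for strict Calabi-Yau threefolds.
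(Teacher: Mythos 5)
Your part (\ref{item:c2>0_semiample}) is essentially sound, and it even departs from the paper in an interesting way: the paper uses the same compactness-plus-lattice observation only to extract a $\phi^*$-fixed very ample class and then runs a Rosenlicht-type dichotomy (Proposition \ref{prop.big}), whereas you push on to conclude that $\phi$ itself has finite order via Lieberman's theorem and $H^0(T_X)\cong H^0(\Omega^2_X)=0$; that conclusion is correct and makes the statement vacuous. One small repair: the level-one slice $\{E\in\Nef(X):c_2(X)\cdot E=1\}$ need not contain any lattice points, so argue instead with the compact slabs $\{E\in\Nef(X):c_2(X)\cdot E\le M\}$ and a spanning set of ample integral classes whose $\phi^*$-orbits stay in such a slab; this gives finite order of $\phi^*$ on $N^1(X)$.

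The genuine gap is in part (\ref{item:c2=0_semiample}). You conflate ``some iterate $\phi^n$ preserves the fibration $f\colon X\to Y$'' --- i.e.\ $f\circ\phi^n=\bar{\phi}\circ f$ for \emph{some} automorphism $\bar{\phi}$ of $Y$ --- with ``$\phi^n$ preserves a rational fibration'' in the sense of Conjecture \ref{conj:MS}, which demands $\pi\circ\phi=\pi$, equivalently a nonconstant $\phi$-invariant rational function. Arranging $\phi^{n*}[D]=[D]$ (which the paper does via Oguiso's finiteness of $c_2$-contractions \cite[Theorem 4.3]{semi-ampleness-conj}; note the cone theorem gives nothing here since $K_X\equiv 0$) only yields equivariance, and that is where the real work starts, not ends. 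One must analyze $\bar{\phi}$ according to $\dim Y$: when $\dim Y=1$, the paper needs Viehweg--Zuo \cite{mfds-over-curves} to force at least three singular fibers, so that an iterate of $\bar{\phi}$ is the identity on $\PP^1$ and only then does an invariant function appear; when $\dim Y=2$ there is in general no invariant fibration at all, and instead one must \emph{produce a dense orbit}, using the surface case of Medvedev--Scanlon \cite{BGT-2} together with the lifting Lemma \ref{lem:Cantat}(b), which in turn rests on the finiteness of $\phi$-invariant divisors \cite{Dixmier}; when $\dim Y=3$ the contraction is birational, so ``preserving $f$'' with identity base would force $\phi=\id$, and the paper instead derives a contradiction from the rationality of $Y$ (Proposition \ref{prop.big}(c)). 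None of this appears in your plan, so part (\ref{item:c2=0_semiample}) as proposed does not prove the theorem.
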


Here by ``divisor" we mean that $D$ is an integral point of $\Nef(X)$, i.e., $D$ is the linear combination of classes of codimension $1$ irreducible subvarieties of $X$ with integer coefficients. Note also that here $c_2(X) \neq 0$. 
Indeed, otherwise there would exist a finite \'etale cover $A \to X$,
where $A$ is an abelian variety. Since we are assuming that $X$ 
is simply connected, this cannot happen.

\begin{remark}[{Concerning the hypothesis in Theorem \ref{thm:MS-CY3} (\ref{item:c2=0_semiample})}]
\label{rmk:when-CY-hypotheses-hold}
If the hypothesis in Theorem \ref{thm:MS-CY3} (\ref{item:c2>0_semiample}) fails, then 
as mentioned above, Miyaoka's theorem 
implies $Z:=c_2(X)^\perp\cap\Nef(X)$ is a non-zero face of $\Nef(X)$. A priori, $Z$ could be irrational. If $Z$ contains a non-zero rational class $D$, then the semi-ampleness conjecture \cite[Conjecture 2.1]{CY-semiample} implies that some scalar multiple $mD$ is a semi-ample divisor, 
and so the hypothesis in (\ref{item:c2=0_semiample}) holds. 

Thus, assuming the semi-ampleness conjecture, the only Calabi-Yau varieties $X$ that Theorem \ref{thm:MS-CY3} does not apply to are those for which $Z$ is non-zero and contains no non-zero rational classes. If \cite[Question-Conjecture 2.6]{semi-ampleness-conj} of Oguiso is true over $\bbar\QQ$, then this situation never occurs when the Picard number $\rho(X)$ is sufficiently large.
\end{remark}

In light of Remark \ref{rmk:when-CY-hypotheses-hold}, we have the following result.

\begin{corollary}
\label{cor:MS-for-all-3folds}
If the semi-ampleness conjecture \cite[Conjecture 2.1]{CY-semiample} and \cite[Question-Conjecture 2.6]{semi-ampleness-conj} are true over $\bbar\QQ$, then the Medvedev-Scanlon Conjecture \ref{conj:MS} is true for all automorphisms of smooth minimal threefolds of non-negative Kodaira dimension and sufficiently large Picard number.
\end{corollary}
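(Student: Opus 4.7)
The plan is a case analysis on the Kodaira dimension, assembling the three main theorems of the paper. If $\kappa(X) > 0$, Theorem \ref{thm:MS-Kod>0} applies unconditionally: every automorphism of $X$ preserves a rational fibration, so Conjecture \ref{conj:MS} holds vacuously and no conjectural input is needed.

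The substantive case is $\kappa(X) = 0$. Here I would first apply Theorem \ref{thm:MS-reduction-3folds} to reduce, unconditionally, to the situation in which $X$ is itself a smooth Calabi-Yau threefold, and then invoke Theorem \ref{thm:MS-CY3}. The only step that is not immediate is verifying that at least one of the two hypotheses of that theorem is in force. By Miyaoka's theorem, $c_2(X)$ is non-negative on $\Nef(X)$. If it is strictly positive, hypothesis (\ref{item:c2>0_semiample}) holds and we are done. Otherwise the face $Z := c_2(X)^\perp \cap \Nef(X)$ is non-zero, and the argument outlined in Remark \ref{rmk:when-CY-hypotheses-hold} applies: under \cite[Question-Conjecture 2.6]{semi-ampleness-conj} of Oguiso over $\bbar\QQ$, once $\rho(X)$ is sufficiently large the face $Z$ must contain a non-zero rational class $D$; the semi-ampleness conjecture \cite[Conjecture 2.1]{CY-semiample} then yields a positive integer $m$ for which $mD$ is a semi-ample divisor, and $c_2(X)\cdot mD = 0$ by construction, placing us in case (\ref{item:c2=0_semiample}) of Theorem \ref{thm:MS-CY3}.

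I expect no serious mathematical obstacle; the corollary is essentially a bookkeeping consequence of Theorems \ref{thm:MS-Kod>0}, \ref{thm:MS-reduction-3folds}, and \ref{thm:MS-CY3} together with the two conjectural inputs. The main point meriting care is that the Picard-number hypothesis is imposed on the original threefold $X$ rather than on the Calabi-Yau target produced by the reduction in Theorem \ref{thm:MS-reduction-3folds}; since that reduction proceeds via the Beauville-Bogomolov decomposition of a finite \'etale cover and since a hyperk\"ahler factor is impossible in odd dimension, the Calabi-Yau threefold appearing in the reduction in dimension three has Picard number controlled by $\rho(X)$, so the large-Picard-number hypothesis transfers as needed.
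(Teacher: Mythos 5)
Your argument is correct and coincides with the paper's, which gives no separate proof of the corollary: it is deduced exactly as you do, by assembling Theorem \ref{thm:MS-Kod>0} for $\kappa>0$ and, for $\kappa=0$, the reduction of Theorem \ref{thm:MS-reduction-3folds} followed by Theorem \ref{thm:MS-CY3} via Remark \ref{rmk:when-CY-hypotheses-hold}. Your additional check that the large-Picard-number hypothesis passes to the Calabi--Yau cover produced by the reduction is sound --- pullback along the finite \'etale cover is injective on $\NS\otimes\QQ$, so $\rho$ can only increase --- a point the paper leaves implicit.
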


\smallskip
\noindent\textbf{Acknowledgments.} 
We thank our colleagues Ekaterina Amerik, Donu Arapura, St\'ephane Druel, 
Najmuddin Fakhruddin, Jesse Kass, Brian Lehman, 
S\'andor Kov\'acs, Tom Scanlon, Alan Thompson, Burt Totaro, 
Tom Tucker, Junyi Xie, and Yi Zhu for stimulating conversations.

\section{The case of positive Kodaira dimension: proof 
of Theorem \ref{thm:MS-Kod>0}} 
\label{sec:shorter-pf}

We begin with two useful lemmas.

\begin{lemma}
\label{equivalent conj:MS}
In order to prove Conjecture~\ref{conj:MS} for 
the dynamical system $(X,\phi)$, it  is sufficient 
to prove Conjecture~\ref{conj:MS} for 
an iterate $(X,\phi^m)$, for some $m\in\NN$.
\end{lemma}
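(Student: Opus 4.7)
The plan is to show that the hypothesis of Conjecture~\ref{conj:MS} passes from $(X,\phi)$ to the iterate $(X,\phi^m)$, so that the assumed conjecture for $(X,\phi^m)$ produces a point whose $\phi^m$-orbit, and therefore whose $\phi$-orbit, is automatically Zariski dense in $X$.

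Concretely, suppose the conjecture has been established for $(X,\phi^m)$, and assume $\phi$ preserves no non-constant rational function, i.e.\ $F(X)^{\phi^*}=F$. The main step is to verify that $F(X)^{(\phi^m)^*}=F$ as well, so that the hypothesis of the conjecture is satisfied by $\phi^m$. Given any $f\in F(X)^{(\phi^m)^*}$, the set $\{f,\phi^*f,(\phi^*)^2f,\ldots,(\phi^*)^{m-1}f\}$ is cyclically permuted by $\phi^*$, so its elementary symmetric functions are fixed by $\phi^*$ and hence lie in $F(X)^{\phi^*}=F$. Thus $f$ satisfies the monic polynomial $\prod_{j=0}^{m-1}(T-(\phi^*)^jf)$ of degree $m$ over $F$, so $f$ is algebraic over $F$; since $F$ is algebraically closed, $f\in F$. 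Therefore $\phi^m$ preserves no non-constant rational function either.

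Invoking Conjecture~\ref{conj:MS} for $(X,\phi^m)$ now yields a point $x\in X(F)$ whose $\phi^m$-orbit $\{\phi^{km}(x):k\geq 0\}$ is Zariski dense in $X$. Since this set is contained in the $\phi$-orbit of $x$ (wherever the iterates are defined), the $\phi$-orbit of $x$ is also Zariski dense, which is exactly the conclusion of Conjecture~\ref{conj:MS} for $(X,\phi)$.

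The main (and essentially only) obstacle is the symmetric-function step showing that $F(X)^{(\phi^m)^*}$ is an algebraic, hence trivial, extension of $F(X)^{\phi^*}=F$; this is where algebraic closedness of $F$ is used decisively. The rest is bookkeeping, relying only on the trivial inclusion of orbits $\mathcal{O}_{\phi^m}(x)\subseteq\mathcal{O}_\phi(x)$.
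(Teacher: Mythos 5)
Your proof is correct and follows essentially the same route as the paper: both reduce the key point to showing that $(\phi^m)^*$-invariance of $f$ forces the elementary symmetric functions of $f,\phi^*f,\dots,(\phi^{m-1})^*f$ to be $\phi^*$-invariant, and both use the trivial inclusion $\mathcal{O}_{\phi^m}(x)\subseteq\mathcal{O}_\phi(x)$ to transfer density of orbits. The only difference is cosmetic: you argue directly and spell out the algebraicity/algebraic-closedness step showing $f\in F$ when all symmetric functions are constant, a detail the paper's contrapositive phrasing leaves implicit.
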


\begin{proof}
It is clear that if $\phi^m$ has a Zariski dense orbit, then 
so does $\phi$. 

It remains to show that if $\phi$ does not preserve a nonconstant 
fibration, then neither does $\phi^m$. Indeed, suppose there 
exists a nonconstant $f\in F(X)$ such that $(\phi^m)^*(f)=f$. 
Then $\phi$ preserves the symmetric function $g_i$ in the rational 
functions $f,\phi^*(f),\dots, (\phi^{m-1})^*(f)$, for 
each $i=1,\dots, m$.  Since $f$ is nonconstant, then at least one
of $g_1, \dots, g_m$ is non-constant. In other words,  
there exists a non-constant function $g_i$ which is nonconstant 
and thus fixed by $\phi^*$, as desired.
\end{proof}

\begin{lemma}
\label{l:fib-over-C}
Let $\phi:X\dasharrow X$ be a birational automorphism defined over a field
$k$. Let $F$ be an uncountable algebraically closed field containing $k$. 
Then the following conditions are equivalent:

\smallskip
(1) $k(X)^{\phi} = k$,

\smallskip
(2) There exists a $F$-point $x \in X(F)$ such that
the orbit $\{ \phi^n(x) \, | \, n = 0, 1, 2, \dots \}$ is dense in $X$. 

\smallskip
(3) $F(X)^{\phi} = F$.
\end{lemma}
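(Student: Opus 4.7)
The plan is to prove the four implications (2)$\Rightarrow$(3), (3)$\Rightarrow$(2), (3)$\Rightarrow$(1), (1)$\Rightarrow$(3) separately. The direction (2)$\Rightarrow$(3) is the contrapositive observation that any non-constant $f \in F(X)^\phi$ is constant on every $\phi$-orbit at which it is defined, so no such orbit is Zariski dense. The direction (3)$\Rightarrow$(2) is precisely Conjecture~\ref{conj:MS} over the uncountable algebraically closed field $F$; this is the Amerik--Campana theorem \cite[Theorem~4.1]{A-C} (or, in the automorphism case, Bell--Rogalski--Sierra \cite[Theorem~1.2]{Dixmier}), cited in the Introduction. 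The direction (3)$\Rightarrow$(1) is routine: because $X$ is geometrically integral, $k(X)/k$ is a regular extension, so $k(X)\otimes_k F$ embeds into $F(X)$ and a $k$-basis computation shows $F\cap k(X)=k$ inside $F(X)$; hence $k(X)^\phi \subseteq F(X)^\phi \cap k(X) = F\cap k(X)=k$.

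The main content is (1)$\Rightarrow$(3). Suppose toward contradiction that some non-constant $f$ lies in $F(X)^\phi$. Since $f$ involves only finitely many elements of $F$, we have $f\in k'(X)^\phi$ for a finitely generated subextension $k\subseteq k'\subseteq F$. Choose a transcendence basis $t_1,\dots,t_d$ of $k'/k$, set $k_0=k(t_1,\dots,t_d)$, and, enlarging $k'$ inside $F$ if necessary, assume $k'/k_0$ is finite Galois with group $G$. Because $X$ is geometrically integral, $G$ acts on $k'(X)$ with fixed field $k_0(X)$, and since $\phi$ is defined over $k$ each $\sigma\in G$ commutes with $\phi^*$. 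Hence the coefficients of the orbit polynomial $\prod_{\sigma\in G}(T-\sigma f)$ lie in $k_0(X)^\phi$. If they all lay in $k_0$, then $f$ would be algebraic over $k_0$ and therefore over $k'$; but $k'$ is algebraically closed in $k'(X)$ (again by geometric integrality), forcing $f\in k'$ and contradicting non-constancy. Thus some coefficient produces a non-constant $g\in k_0(X)^\phi$.

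To finish, identify $k_0(X)=k(X\times_k \AA^d_k)$ and let $\pi\colon X\times_k\AA^d_k\to\AA^d_k$ be the second projection. The condition $g\notin k_0 = \pi^* k(\AA^d_k)$ says that $g$ does not factor through $\pi$, so $g$ is non-constant on the generic fiber of $\pi$. Hence there is a nonempty Zariski open $U\subseteq\AA^d_k$ such that $g|_{X\times\{c\}}$ is non-constant for every $c\in U$. Since we work in characteristic zero, $k$ is infinite and $U(k)\neq\varnothing$; for any $c\in U(k)$ the restriction $g|_{X\times\{c\}}$ is a non-constant element of $k(X)^\phi$ (because $\phi\times\id_{\AA^d_k}$ preserves the fibers of $\pi$), contradicting (1). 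The principal technical obstacles are the two descent stages in (1)$\Rightarrow$(3): ensuring that Galois averaging over $G$ does not kill non-constancy (which needs $k'$ algebraically closed in $k'(X)$), and ensuring that specialization of the transcendentals $t_i$ to $k$-values does not kill non-constancy (which needs $k$ infinite together with the fact that $g$ does not pull back from $\AA^d_k$).
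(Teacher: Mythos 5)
Your proposal is correct, but it takes a genuinely different route from the paper. The paper's proof is essentially a citation: the hard implication (1)$\Rightarrow$(2) is taken directly from \cite[Theorem 1.2]{dynamical-Rosenlicht}, which is formulated over the field of definition $k$, and (2)$\Rightarrow$(3), (3)$\Rightarrow$(1) are declared obvious. You instead factor the hard direction as (1)$\Rightarrow$(3)$\Rightarrow$(2): for (3)$\Rightarrow$(2) you invoke the uncountable-field case of Conjecture~\ref{conj:MS} (Amerik--Campana \cite{A-C}, or \cite[Theorem~1.2]{Dixmier} for automorphisms), and for the descent (1)$\Rightarrow$(3) you give a self-contained field-theoretic argument, averaging over the Galois group of $k'/k_0$ to produce an invariant $g\in k_0(X)\setminus k_0$ and then specializing the transcendentals $t_1,\dots,t_d$ at a $k$-point of a dense open $U\subseteq\AA^d_k$ where non-constancy and $\phi$-invariance survive. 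That argument is sound, and the two pivots you flag ($k'$ algebraically closed in $k'(X)$, and $g$ not pulled back from $\AA^d_k$) are exactly the right ones; what your route buys is an explicit proof of the descent step that the paper outsources to the cited theorem. What it costs is generality: the lemma as stated carries no characteristic hypothesis, while your proof uses characteristic $0$ twice (separability/Galois closure of $k'/k_0$, and $k$ infinite so that $U(k)\neq\emptyset$), and your reference for (3)$\Rightarrow$(2) is a characteristic-$0$ (indeed complex-analytic) result, so for an arbitrary uncountable algebraically closed $F$ one would still want a routine transfer argument or simply \cite[Theorem~1.2]{dynamical-Rosenlicht}. Since every application of the lemma in the paper is over fields of characteristic $0$, this does not undermine your proof for the paper's purposes, but it is worth flagging.
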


\begin{proof} The implication (1) $\Longrightarrow$ (2) 
follows from \cite[Theorem 1.2]{dynamical-Rosenlicht}.

The remaining implications (2) $\Longrightarrow$ (3) 
and (3) $\Longrightarrow$ (1) are obvious.
\end{proof}


\begin{proof}[Proof of Theorem \ref{thm:MS-Kod>0}] 
Let $k$ be a finitely generated field such that both $X$ and $\phi$ are defined over $k$. Let $F$ be an algebraically closed field containing $k$; since $k$ is finitely generated, we may view $F$ as a subfield of $\mathbb{C}$.  
The theorem asserts that $F(X)^{\phi} \neq F$. 
By Lemma \ref{l:fib-over-C}, we may also assume that $F = \mathbb C$ 
is the field of complex numbers. 

Note that we may replace $X$ by a birationally equivalent variety;
this does not change $\CC(X)$ or $\CC(X)^{\phi}$.  After resolving the
singularities of $X$, we may also assume that $X$ is smooth.


Next, consider the Iitaka fibration, i.e.~the rational map $f:X\dasharrow\PP^N$ defined 
by the complete linear system $|mK_X|$ for $m$ sufficiently divisible. 
The image is a projective variety $Y$ of dimension $\kappa(X)$. 
Since $\phi^*K_X^{\otimes m}\simeq K_X^{\otimes m}$, we have 
an induced action $\bbar\phi$ on $\PP^N$ such that 
$f\circ\phi=\bbar\phi \circ f$.
By a theorem of Deligne and Ueno \cite[Thm 14.10]{ueno}, the image 
of the $m$-th pluricanonical representation 
$\rho_m:\bir(X)\to\GL(H^0(mK_X))$ is a finite group. 
Thus, after replacing $\phi$ by an iterate, we may assume that 
$\rho_m(\phi)=\id$; that is, $\bbar\phi$ is the identity on $\PP^N$.
So, $\phi$ preserves a rational fibration, as claimed. 
\end{proof}

\section{The Beauville-Bogomolov decomposition theorem over $\QQbar$}

We now recall the Beauville-Bogomolov decomposition theorem. 
Suppose $X$ is a smooth complex projective variety with numerically canonical
divisor $K_X$. 
Beauville \cite[p.~9]{BB-decomp} defines 
$\pi:\widetilde{X}\to X$ to be a \emph{minimal split cover} if it 
is a finite \'etale Galois cover, 
$\widetilde{X}\simeq A\times S$, where $A$ is an abelian 
variety and $S$ is simply connected, and there is no non-trivial 
element of the Galois group that simultaneously acts as translation 
on $A$ and the identity on $S$. The main theorem together with 
Proposition 3 of \cite{BB-decomp} show that every such $X$ has 
a minimal split covering and that it is unique up to non-unique isomorphism.

In the sequel we will need a variant of the Beauville-Bogomolov decomposition 
theorem \cite{BB-decomp} over $\QQbar$. For lack of a suitable reference, 
we will prove it below.

\begin{proposition}
\label{prop:BB-decomp-over-Qbar}
Let $X$ be a smooth projective minimal variety over $\QQbar$ with $K_X$ numerically trivial. Then there exists a finite \'etale Galois cover $\widetilde{X}\to X$ defined over $\QQbar$ such that
\begin{enumerate}
\item $\widetilde{X}=A\times \prod_i Y_i\times \prod_j Z_j$, where $A$ is an abelian variety, the $Y_i$ are Calabi-Yau, and the $Z_j$ are hyperk\"ahler,
\item no element of the Galois group acts simultaneously as translation on $A$ and the identity on all of the $Y_i$ and $Z_j$.
\end{enumerate}
\end{proposition}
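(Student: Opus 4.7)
The plan is to deduce the result by applying the classical Beauville-Bogomolov decomposition over $\CC$ and descending the output to $\QQbar$ using canonicity of the constructions involved. Fix an embedding $\QQbar \hookrightarrow \CC$, and apply the classical theorem to $X_{\CC}$ to obtain a minimal split cover $\widetilde{X}_{\CC} \to X_{\CC}$ with decomposition $\widetilde{X}_{\CC} \simeq A_{\CC} \times \prod_i Y_i^{\CC} \times \prod_j Z_j^{\CC}$ satisfying the minimality property. By the comparison $\pi_1^{\text{\'et}}(X_{\QQbar}) \cong \widehat{\pi_1^{\text{top}}(X_{\CC})}$ (Riemann existence theorem together with invariance of the \'etale fundamental group under algebraically closed base change), finite \'etale covers of $X$ over $\QQbar$ are in bijection with finite \'etale covers of $X_{\CC}$. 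Hence the Galois cover descends uniquely to a Galois \'etale cover $\pi \colon \widetilde{X} \to X$ over $\QQbar$ with the same Galois group.

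Next I would separate off the abelian factor. Since $\widetilde{X}$ is a non-empty finite type $\QQbar$-scheme, it has a $\QQbar$-point $p$; this makes the Albanese morphism $\alpha \colon \widetilde{X} \to A := \Alb(\widetilde{X})$ with $\alpha(p) = 0$ canonical over $\QQbar$. After base change to $\CC$, $\alpha$ coincides (up to the translation sending $p$ to $0$) with projection to the abelian factor of the BB decomposition, and so $A_{\CC}$ matches that factor. Set $S := \alpha^{-1}(0)$, defined over $\QQbar$ with base change equal to the simply connected factor. To produce the product splitting $\widetilde{X} \simeq A \times S$ over $\QQbar$, I would use that the connected component $\Aut^0(\widetilde{X})$ is a group scheme over $\QQbar$ whose maximal abelian subvariety $B$ is also defined over $\QQbar$; after base change to $\CC$, $B_{\CC}$ is identified with $A_{\CC}$ acting by translation on the first factor. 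The action of $B$ on $\widetilde{X}$ is defined over $\QQbar$ and free, and the composition $S \hookrightarrow \widetilde{X} \to \widetilde{X}/B$ is an isomorphism (a property checked after extension to $\CC$), giving a section of the principal $B$-bundle $\widetilde{X} \to \widetilde{X}/B$ and hence a splitting $\widetilde{X} \simeq B \times S \simeq A \times S$ over $\QQbar$.

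For the further decomposition of $S$, the tangent bundle $T_{S_{\CC}}$ admits a canonical holomorphic splitting into the tangent bundles of the individual BB factors, arising from the holonomy decomposition of a Ricci-flat K\"ahler metric but also characterizable algebraically (for instance via the natural map $H^0(\Omega^2_S)\otimes \O_S \to \Omega^2_S$ and its associated foliations, together with the maximal-parallel subbundles cut out by the holomorphic forms). Canonicity makes the splitting $\text{Gal}(\CC/\QQbar)$-equivariant, so it descends to a splitting $T_S = \bigoplus \T_i \oplus \bigoplus \T_j'$ over $\QQbar$ into involutive subbundles, where factors of the same isomorphism type may need to be collected together. A second round of the Albanese-style trivialization argument from the previous paragraph, applied to $S$ with each $\T_i$ (respectively $\T_j'$) playing the role formerly played by $T_A$, then produces the desired product decomposition $S = \prod_i Y_i \times \prod_j Z_j$ over $\QQbar$. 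Finally, condition~(2) of the proposition transfers directly from the minimality of the classical BB cover, since the two Galois actions agree after base change to $\CC$.

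The step I expect to be the main obstacle is the last one: descending the CY/hyperk\"ahler decomposition of $S$ to $\QQbar$ as actual factor \emph{varieties}, not merely as a splitting of the tangent sheaf. The tangent splitting descends almost for free by canonicity, but integrating each foliation to an algebraic product factor with a $\QQbar$-structure requires either invoking rigidity/uniqueness of Albanese-type quotients for simply connected projective varieties, or running a quotient-by-foliation construction over $\QQbar$ and verifying the resulting factor is the expected CY or hyperk\"ahler; this is where the bulk of the technical work — and possibly an appeal to literature on the algebraicity of the BB factors — will be concentrated.
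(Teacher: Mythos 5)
Your first two steps are fine and genuinely different from the paper's route: descending the cover itself via the invariance of the \'etale fundamental group, and splitting off the abelian factor over $\QQbar$ via the Albanese morphism together with the action of $\aut^0(\widetilde{X})$ (which over $\CC$ is exactly translation by the abelian factor, since the Calabi-Yau and hyperk\"ahler factors carry no vector fields) is a workable alternative for that part. The genuine gap is exactly where you flag it, and it is not merely technical: your mechanism for splitting $S$ into its Calabi-Yau and hyperk\"ahler factors over $\QQbar$ does not work as described. First, ``canonicity'' of the Beauville--Bogomolov tangent-bundle splitting only gives that a Galois conjugate of the collection of summands is again such a collection, i.e.\ the set of summands is Galois-stable \emph{up to permutation and up to (non-unique) isomorphism}; Galois may permute conjugate factors, in which case the individual summands do not descend, and ``collecting factors of the same isomorphism type'' produces a $\QQbar$-subbundle whose hypothetical integral would be a $\QQbar$-form of a product, not a Calabi-Yau or hyperk\"ahler variety in the sense required by the statement. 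Moreover, since the decomposition is only canonical up to non-unique isomorphism, even Galois-stability of each factor does not by itself give effective descent (no cocycle). Second, your ``second round of the Albanese-style trivialization argument'' has no content for these factors: the trick in the abelian step used the Albanese map and the free action of an abelian subgroup of $\aut^0$, and for a Calabi-Yau or hyperk\"ahler factor both $\Alb$ and $\aut^0$ are trivial, so there is no group action or Albanese-type projection to integrate the foliation into an algebraic product factor over $\QQbar$.

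The paper avoids this issue entirely by a spreading-out and specialization argument: it descends the complex Beauville--Bogomolov torsor $T\to X_\CC$, with its product decomposition, to a finitely generated field $F/\QQbar$, spreads everything out over a smooth $\QQbar$-variety $V$ with function field $F$ (the $G$-torsor, an abelian scheme, and smooth proper families of the putative Calabi-Yau and hyperk\"ahler factors, after shrinking $V$ so that the relevant conditions --- vanishing of $(\pi_i)_*\Omega^p$, triviality of the canonical bundle, a fiberwise non-degenerate $2$-form --- hold in the family), and then specializes at a $\QQbar$-point of $V$; simple connectedness of the specialized factors is preserved by the specialization theorem for \'etale fundamental groups. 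This constructs the factors directly as $\QQbar$-varieties and never has to descend the complex decomposition of a fixed cover, which is precisely the step your proposal leaves open. To repair your approach you would need either such a spreading-out argument or a genuine descent argument for the product decomposition (with its permutation and cocycle issues addressed), so as it stands the proof is incomplete at its main step.
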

\begin{proof}
The Beauville-Bogomolov decomposition theorem tells us that there is 
a finite group $G$ and 
a $G$-torsor $T\to X_\CC$ with $T=A\times\prod_i Y_i\times\prod_j Z_j$, where $A$ is an abelian variety, the $Y_i$ are Calabi-Yau varieties, and the $Z_j$ are hyperk\"ahler varieties. By a standard limit argument, there exists a finitely generated field extension $F/\QQbar$ so that we can descend $T\to X_\CC$ to a $G$-torsor $T'\to X_F$, the abelian variety $A$ to an abelian variety $A'$ over $F$, and the $Y_i$ (resp.~$Z_j$) to smooth proper $F$-schemes $Y'_i$ (resp.~$Z'_j$). Moreover, after possibly enlarging $F$, we can descend the isomorphism $T\simeq A\times\prod_i Y_i\times\prod_j Z_j$ to an isomorphism $T'\simeq A'\times\prod_i Y'_i\times\prod_j Z'_j$. Since $H^0(\Omega^p_{Y'_i})\otimes_F\CC=H^0(\Omega^p_{Y_i})$, we have $H^0(\Omega^p_{Y'_i})=0$ for $0<p<\dim Y'_i$. By similar reasoning, we see $H^0(\Omega^2_{Z'_j})$ is 1-dimensional and that $K_{Y'_i}\simeq\O_{Y'_i}$; the latter statement can be proved by using the fact that a line bundle $\L$ on a projective variety is trivial if and only if $H^0(\L)$ and $H^0(\L^\vee)$ are both non-zero. Choosing a generator $\omega_j\in H^0(\Omega^2_{Z'_j})$, we have an induced map $T_{Z'_j}\to\Omega^1_{Z'_j}$ and non-degeneracy of $\omega_j$ is equivalent to this map being an isomorphism.
 Since this is true after a field extension from $F$ to $\CC$, 
it is true over $F$.

Next, let $V$ be a smooth $\QQbar$-variety with function field $F$. After possibly shrinking $V$, we can extend $T'\to X_F$ to a $G$-torsor $T''\to X_V$, extend $A'$ to an abelian scheme $A''\to V$, $Y'_i$ and $Z'_j$ to smooth proper $V$-schemes $Y''_i$ and $Z''_j$, and can assume $T''\simeq A''\times\prod_iY''_i\times\prod_jZ''_j$ over $V$. Let $\pi_i:Y''_i\to V$ and $\psi_j:Z''_j\to V$ be the structure maps. After suitably shrinking $V$, we can assume $(\pi_i)_*\Omega^p_{Y''_i/V}=0$ for $0<p<\dim Y''_i$, that $(\psi_j)_*\Omega^2_{Z''_j/V}\simeq\O_V$, and that there is a non-vanishing section $\omega_j$ of $(\psi_j)_*\Omega^2_{Z''_j/V}$ whose induced map $T_{Z''_j/V}\to\Omega^1_{Z''_j/V}$ is an isomorphism.

Finally, we show that for all maps $t:\spec\CC\to V$, the complex analytifications of the $(Y''_i)_t$ and $(Z''_j)_t$ are simply connected. First note that by the Beauville-Bogomolov decomposition theorem, these varieties have virtually abelian fundamental groups; specifically, if $W$ denotes one of these varieties, then there is a finite Galois cover $A\times S\to W$ with $A$ an abelian variety and $S$ simply connected, so $\pi_1(W)$ contains $\pi_1(A)\simeq\ZZ^r$ as a finite index subgroup. Next, note that if the \'etale fundamental group $\pi_1^{et}(W)$ is trivial, then so is $\pi_1(W)$. Indeed, if $\pi_1^{et}(W)=0$, then $r=0$, so $\pi_1(W)$ is finite and therefore, $\pi_1(W)=\pi_1^{et}(W)=0$. Thus, it suffices to prove that for every geometric point $\bbar{v}$ of $V$, the \'etale fundamental groups $\pi_1^{et}((Y''_i)_{\bbar{v}})$ and $\pi_1^{et}((Z''_j)_{\bbar{v}})$ are trivial. Since the \'etale fundamental groups of the geometric generic fibers $(Y''_i)_{\bbar\eta}=Y_i$ and $(Z''_j)_{\bbar\eta}=Z_j$ are trivial, this follows immediately from specialization results of the \'etale fundamental group \cite[Proposition 0C0Q]{stacks-project}.

Now, choosing any $\QQbar$-point $v\in V$ gives our desired $G$-torsor of $T''_v\to X$.
\end{proof}

\section{Proof of Theorem \ref{thm:MS-reduction-nfolds}}
\label{sec:Kod-dim0--->CY-HK-Ab}

In this section we will prove Theorem \ref{thm:MS-reduction-nfolds}.
The key ingredients of the proof are supplied by Lemma~\ref{l:MS-finite-maps} 
and Proposition~\ref{prop:BBD-lifting} below.

\begin{lemma}
\label{l:MS-finite-maps}
Consider the commutative diagram
\[
\xymatrix{
X \ar@{-->}[r]^-{\phi} \ar[d]_{\pi} & X \ar[d]^{\pi} \\
Y \ar@{-->}[r]^-{\psi} & Y, }
\]
where $\pi \colon X \to Y$ is a dominant morphism of irreducible varieties, $\phi$ and $\psi$ are birational isomorphisms of $X$ and $Y$, respectively, and the entire diagram is defined over $\bbar \QQ$. Further suppose that $\dim(X) = \dim(Y)$ and $\bbar \QQ(X)^{\phi} = \bbar \QQ$. Then

\smallskip
(a) $\bbar \QQ(Y)^{\psi} = \bbar \QQ$.

\smallskip
In parts (b) and (c), assume further that $\pi \colon X \to Y$ is a $G$-torsor
for some finite smooth group scheme $G$.

\smallskip
(b) If $\phi$ is regular at $x \in X$,
then $\psi$ is regular at $y:= \pi(x) \in Y$.

\smallskip
(c) If the Medvedev-Scanlon conjecture holds for $X$, 
then there exists a point $y \in Y(\bbar \QQ)$ 
whose $\psi$-orbit is dense in $Y$.
\end{lemma}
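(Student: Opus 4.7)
For part (a), the plan is to show that $\pi^*$ carries $\psi$-invariant functions to $\phi$-invariant functions, and then invoke the hypothesis $\bbar\QQ(X)^\phi=\bbar\QQ$ together with the injectivity of $\pi^*\colon\bbar\QQ(Y)\hookrightarrow\bbar\QQ(X)$ (which follows from dominance of $\pi$). Concretely, if $\psi^*g=g$, then the commutativity of the square gives $\phi^*(\pi^*g)=\pi^*(\psi^*g)=\pi^*g$, so $\pi^*g\in\bbar\QQ(X)^\phi=\bbar\QQ$, and then injectivity of $\pi^*$ forces $g\in\bbar\QQ$.

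For part (b), the plan is to descend the regular morphism $\pi\circ\phi$ along the $G$-torsor $\pi$. Writing $V\subseteq X$ for the regularity locus of $\phi$, the equality $\psi\circ\pi=\pi\circ\phi$ of rational maps combined with $\pi\circ g=\pi$ for each $g\in G$ yields $\pi\circ\phi\circ g=\pi\circ\phi$ as rational maps. Since $\pi$ is a $G$-torsor, $X$ is irreducible, and $G$ is finite \'etale over $\bbar\QQ$, this rigidifies to $\phi\circ g=\tau(g)\circ\phi$ for a uniquely determined element $\tau(g)\in G(\bbar\QQ)$. Comparing the regularity loci of the two sides of this last equality gives $g^{-1}(V)=V$, so $V$ is $G$-stable, and hence $\pi\circ\phi|_V\colon V\to Y$ is a $G$-invariant morphism. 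By descent for the $G$-torsor $V\to V/G=\pi(V)$, this morphism factors through a unique morphism $\tilde\psi\colon\pi(V)\to Y$, which must coincide with $\psi$ as a rational map. Thus $\psi$ is regular on the open set $\pi(V)$, which contains $y=\pi(x)$.

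For part (c), by (a) the condition $\bbar\QQ(X)^\phi=\bbar\QQ$ passes to $\bbar\QQ(Y)^\psi=\bbar\QQ$, so MS is a meaningful assertion for $(Y,\psi)$. Applying MS for $X$, I choose $x\in X(\bbar\QQ)$ whose $\phi$-orbit is defined and Zariski dense in $X$. By part (b), the commutativity of the diagram, and induction on $n$, the iterate $\psi^n(\pi(x))$ is defined for every $n\ge0$ and equals $\pi(\phi^n(x))$. Since $\pi$ is finite (hence closed) and surjective, it carries Zariski dense subsets of $X$ to Zariski dense subsets of $Y$, so the $\psi$-orbit of the $\bbar\QQ$-point $\pi(x)$ is Zariski dense in $Y$, as required.

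The technical heart of the lemma is the descent step in (b): one has to pass carefully between equalities of rational maps and equalities of regular morphisms in order to pin down both the $G$-stability of $V$ and the $G$-invariance of the regular morphism $\pi\circ\phi|_V$. Once the descent is secured, parts (a) and (c) follow by essentially formal manipulations.
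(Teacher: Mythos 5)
Your proof is correct and takes essentially the same route as the paper's: for (a) the direct pullback computation along the dominant $\pi$ (the paper instead cites its Lemma~\ref{l:fib-over-C}, but the content is the same), for (b) descent of the $G$-invariant rational map $\pi\circ\phi$ along the torsor exactly as in the paper (you merely make the $G$-stability of the regularity locus explicit), and for (c) pushing a dense orbit forward along $\pi$. One small caveat in (c): with the paper's definition the orbit of $x$ only includes those $n$ for which the rational map $\phi^n$ is defined at $x$, and definedness need not propagate stepwise (so a point with dense orbit need not have every iterate defined, and your induction could skip); the clean fix, which is what the paper does, is to apply (b) directly to the commuting square for the pair $(\phi^n,\psi^n)$, giving $\psi^n(y)=\pi(\phi^n(x))$ for every $n$ with $\phi^n$ defined at $x$, which suffices for density of the $\psi$-orbit.
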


\begin{proof} (a) follows from 
Lemma~\ref{l:fib-over-C} using the embedding of $\mathbb{C}(Y)$ into $\mathbb{C}(X)$ through the induced map $\pi^*$.

(b) The composition $\pi \circ \phi \colon X \dasharrow Y$ 
is a $G$-invariant rational map which is regular at $x$.
Hence, it descends to a rational map $Y \dasharrow Y$ which is
regular at $y$. Clearly, this map coincides with $\psi$.
In other words, $\psi$ is regular at $y$, as claimed.

(c) Since the Medvedev-Scanlon Conjecture holds
for $\phi$, there exists a point $x \in X(\bbar\QQ)$ 
such that the $\phi$-orbit of $x$ is dense in $X$. Using part~(b) for each iterate of $\phi$, we conclude that for each $n\in\mathbb{N}$ such that $\phi^n$ is defined at $x$, we have that $\psi^n$ is defined at $y := \pi(x)$. Furthermore, since the orbit of $x$ under $\phi$ is dense in $X$, we conclude that also the orbit of $y$ under $\psi$  
is dense in $Y$.
\end{proof}

\begin{remark}
\label{rmk:pseudo-aut}
Let $X$ be a minimal threefold with $K_X$ torsion. If $\phi$ is 
a birational automorphism of $X$, then as Lazi\'c shows 
in \cite[p.~197]{beyond-K} between Remarks 6.1 and 6.2, $\phi$ 
is a pseudo-automorphism, i.e.~neither $\phi$ nor $\phi^{-1}$ 
contracts a divisor. 
\end{remark}

\begin{proposition}
\label{prop:BBD-lifting}
Let $X$ be a smooth projective minimal variety over $\QQbar$ with $K_X$ numerically trivial, and let $\pi:\widetilde{X}\to X$ be a minimal split cover provided by Proposition \ref{prop:BB-decomp-over-Qbar}. Then for every birational automorphism $\phi$ of $X$ over $\QQbar$, there exists a birational automorphism $\widetilde{\phi}$ of $\widetilde{X}$ over $\QQbar$ such that $\pi\circ\widetilde{\phi}=\phi\circ\pi$.
\end{proposition}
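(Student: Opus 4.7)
The plan is to pull back $\pi : \widetilde X \to X$ along $\phi$ and show that the resulting étale cover is isomorphic to $\pi$, from which $\widetilde\phi$ can be read off. First I would verify that $\phi$ is a pseudo-automorphism, i.e., there exist open subsets $U, V \subseteq X$ with complements of codimension at least two such that $\phi$ restricts to an isomorphism $\phi|_U : U \xrightarrow{\sim} V$. This is standard whenever $K_X \equiv 0$: on a common smooth resolution $p, q : W \to X$ of $\phi$ (with $q = \phi \circ p$), the divisors $K_W - p^{\ast} K_X$ and $K_W - q^{\ast} K_X$ are effective and exceptional, their difference $q^{\ast} K_X - p^{\ast} K_X$ is numerically trivial, and the negativity lemma rules out either $\phi$ or $\phi^{-1}$ contracting a divisor. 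This is the principle invoked in Remark~\ref{rmk:pseudo-aut}.

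Next, form the pullback $W := \widetilde X \times_{X, \phi|_U} U \to U$. This is a finite \'etale Galois $G$-cover of $U$ equipped with a canonical $G$-equivariant isomorphism $W \xrightarrow{\sim} \pi^{-1}(V)$ lying over $\phi|_U$. By Zariski--Nagata purity, valid since $X$ is smooth and $X \setminus U$ has codimension $\geq 2$, the cover $W \to U$ extends uniquely to a finite \'etale Galois $G$-cover $\pi' : W' \to X$. If one can produce an isomorphism $\alpha : W' \xrightarrow{\sim} \widetilde X$ of $G$-covers of $X$, then the composition
\[
\widetilde X \;\dashrightarrow\; W'|_U \;\xrightarrow{\sim}\; W \;\xrightarrow{\sim}\; \pi^{-1}(V) \;\hookrightarrow\; \widetilde X
\]
is a birational $\widetilde\phi$ satisfying $\pi \circ \widetilde\phi = \phi \circ \pi$ as rational maps.

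The crux is therefore to construct $\alpha$. My approach would be to show that $\pi' : W' \to X$ is itself a minimal split cover of $X$ and then invoke the uniqueness clause of Beauville's theorem (the Main Theorem and Proposition~3 of \cite{BB-decomp}, adapted to $\QQbar$ via Proposition~\ref{prop:BB-decomp-over-Qbar}). For this, $W'$ is smooth projective with $K_{W'} = {\pi'}^{\ast} K_X \equiv 0$, and the open identification $W'|_U \simeq \pi^{-1}(V)$ exhibits $W'$ as birational to $\widetilde X = A \times \prod_i Y_i \times \prod_j Z_j$; one would then transport the product projections from $\widetilde X$ to $W'$ across this birational isomorphism in codimension one (using purity once more to extend morphisms), and verify that the minimality condition on the Galois action transports $G$-equivariantly along the identification.

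The main obstacle will be this last verification---producing the product decomposition on $W'$ and the minimality of the Galois action. Should it become technically cumbersome, a cleaner alternative is to argue entirely at the level of \'etale fundamental groups: purity yields $\pi_1^{\mathrm{et}}(U) = \pi_1^{\mathrm{et}}(X) = \pi_1^{\mathrm{et}}(V)$, so $\phi|_U$ induces an outer automorphism of $\pi_1^{\mathrm{et}}(X)$, and it then suffices to show that the open normal subgroup classifying $\pi : \widetilde X \to X$ is characteristic in $\pi_1^{\mathrm{et}}(X)$; this should follow from the structure theory of virtually abelian profinite groups (the normal subgroup in question being, up to the defining minimality of the split cover, of an essentially canonical nature as a maximal torsion-free abelian open normal subgroup of the correct index).
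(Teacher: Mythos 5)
Your setup coincides with the paper's: restrict $\phi$ to an open $U\subseteq X$ with codimension-two complement (your negativity-lemma justification of the pseudo-automorphism property is fine and is the content of Remark \ref{rmk:pseudo-aut}), pull back the $G$-torsor along $\phi|_U$, extend over $X$ by purity (the paper cites \cite[Proposition 3.2]{martin-G-torsors}), and reduce to producing $\alpha\colon W'\to\widetilde{X}$ over $X$ via the uniqueness/factorization statement \cite[Proposition 3]{BB-decomp} once $W'$ is known to be split. The gap is exactly at the step you yourself flag as the crux, and neither of your routes closes it. In the first route, purity extends \'etale covers, not morphisms: only the projection to the abelian factor extends automatically (rational maps to abelian varieties are regular), while the projection to $\prod_i Y_i\times\prod_j Z_j$ is merely a rational map on $W'$, and being isomorphic in codimension one to $A\times\prod_i Y_i\times\prod_j Z_j$ does not formally yield a product decomposition (small modifications of such a product are isomorphic to it in codimension one with no evident product structure). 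Showing $W'$ is split is precisely the nontrivial content; the paper does it by passing to a further split cover $Y'\to\widetilde{X}'$, transporting it to an $H$-torsor $Y\to\widetilde{X}$ (using the pseudo-automorphism property and purity again), identifying $H\subseteq\Gamma$ as the elements acting as translation on $B$ and identity on $T$ via the \emph{minimality} of $\pi$ (through the proof of \cite[Proposition 3]{BB-decomp}), and then comparing the $H$-actions across the $H$-equivariant birational map $\psi$ using Albanese functoriality and restriction to general fibers.

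Your fallback rests on a false claim: the open normal subgroup $N\leq\pi_1^{et}(X)$ classifying the minimal split cover is in general neither characteristic nor describable as a maximal torsion-free abelian open normal subgroup of the correct index. For example, let $S$ be a K3 surface with an involution $\sigma\neq\id$, let $E$ be an elliptic curve with a nonzero $2$-torsion point $t$, and let $X=(E\times S)/\ZZ_2$, the involution acting by $(x,s)\mapsto(x+t,\sigma(s))$. Then $E\times S\to X$ is the minimal split cover (the nontrivial Galois element does not act as identity on $S$), yet $\pi_1^{et}(X)\cong\widehat{\ZZ}^2$ because the K3 fibers of $X\to E/\langle t\rangle$ are simply connected. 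So $\pi_1^{et}(X)$ is itself torsion-free abelian, and $N$ is just one of several index-two open subgroups, permuted by automorphisms of $\widehat{\ZZ}^2$: it is not characteristic and is not singled out by any maximality property. What the proposition actually requires is that the specific automorphism induced by $\phi$ carries $N$ into a conjugate of $N$, and that is a geometric statement (this is what the paper's Albanese/general-fiber argument, leaning on minimality of the split cover, establishes); it cannot be extracted from the profinite group structure alone.
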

\begin{proof}
We know that $\pi\colon\widetilde{X}\to X$ is a $G$-torsor for some finite \'etale group scheme $G$, and that $\widetilde{X}=A\times S$ with $S$ simply connected and $A$ an abelian variety. Since $X$ is smooth, $\phi$ is regular on an open subset $U\subseteq X$ with $X\setminus U$ having codimension at least 2. Consider the Cartesian diagram
\[
\xymatrix{
\widetilde{X}\times_XU\ar[r]\ar[d] & \widetilde{X}\ar[d]^\pi\\
U\ar[r]^-{\phi|_U} & X
}
\]
Since $X\setminus U$ has codimension at least 2, by \cite[Proposition 3.2]{martin-G-torsors}, the $G$-torsor $\widetilde{X}\times_XU\to U$ extends uniquely to a $G$-torsor $\pi':\widetilde{X}'\to X$. We therefore have a commutative diagram
\[
\xymatrix{
\widetilde{X}'\ar@{-->}[r]^-{\widetilde{\phi}}\ar[d]^{\pi'} & \widetilde{X}\ar[d]^\pi\\
X\ar@{-->}[r]^-{\phi} & X
}
\]
So, to finish the proof, it suffices to show that $\widetilde{X}'$ is split, i.e.~the product of an abelian variety and a simply connected variety; indeed, if $\widetilde{X}'$ is split, then by \cite[Proposition 3]{BB-decomp}, there exists a map $\alpha\colon\widetilde{X}'\to\widetilde{X}$ such that $\pi'=\pi\circ\alpha$. Then by degree considerations, $\alpha$ must be an isomorphism and so $\widetilde{\phi}\circ\alpha$ is the birational map whose existence we asserted in the statement of the proposition.

The rest of the proof is devoted to showing that $\widetilde{X}'$ is split. Since $\pi'$ is \'etale, we see $(\pi')^*K_X=K_{\widetilde{X}'}$ and so $K_{\widetilde{X}'}$ is numerically trivial. Thus, by Proposition \ref{prop:BB-decomp-over-Qbar}, there is a minimal split cover $p'\colon Y'\to\widetilde{X}'$ defined over $\QQbar$. After replacing $p'$ by a further \'etale cover, we can assume that $\pi\circ p'$ is Galois with group $\Gamma$, although now $p'$ is merely a split covering instead of a minimal split covering. Since $Y'$ is split, we know $Y'=B'\times T'$ with $T'$ simply connected and $B'$ an abelian variety. Let $H$ be the Galois group of $Y'$ over $\widetilde{X}'$.

Mimicking the argument in the first paragraph of the proof, we obtain a diagram
\[
\xymatrix{
Y'\ar@{-->}[r]^-{\psi}\ar[d]^{p'} & Y\ar[d]^p\\
\widetilde{X}'\ar@{-->}[r]^-{\widetilde{\phi}}\ar[d]^{\pi'} & \widetilde{X}\ar[d]^\pi\\
X\ar@{-->}[r]^-{\phi} & X
}
\]
where $p$ is an $H$-torsor and $\pi\circ p$ is a $\Gamma$-torsor. Indeed, by Remark \ref{rmk:pseudo-aut}, there exist open subsets $U$ and $V$ of $X$ whose complements have codimension at least 2 and such that $\phi|_U\colon U\to V$ is an isomorphism. Pulling back $Y'|_U\to\widetilde{X}'|_U$ via the isomorphism $\widetilde{X}'\setminus \widetilde{X}'|_U$, we obtain a Cartesian diagram
\[
\xymatrix{
Y' \ar@{}[r]|-*[@]{\supseteq}\ar[d]^{p'} & Y'|_U\ar[d] \ar[r]^{\simeq} & Z\ar[d] &\\
\widetilde{X}' \ar@{}[r]|-*[@]{\supseteq}\ar[d]^{\pi'} & \widetilde{X}'|_U\ar[d] \ar[r]^{\simeq} & \widetilde{X}|_V\ar[d]\ar@{}[r]|-*[@]{\subseteq} & \widetilde{X}\ar[d]^{\pi}\\
X \ar@{}[r]|-*[@]{\supseteq} & U\ar[r]^{\simeq} & V\ar@{}[r]|-*[@]{\subseteq} & X
}
\]
where the map $Z\to\widetilde{X}|_V$ is an $H$-torsor. Since $\pi$ is \'etale, hence codimension preserving, $\widetilde{X}\setminus \widetilde{X}|_V$ has codimension at least 2. By \cite[Proposition 3.2]{martin-G-torsors}, the $H$-torsor $Z\to\widetilde{X}|_V$ extends uniquely to an $H$-torsor $p\colon Y\to\widetilde{X}$. Since $X\setminus U$ has codimension at least 2, another application of \cite[Proposition 3.2]{martin-G-torsors} shows that $\pi\circ p$ is a $\Gamma$-torsor.

Since $Y$ is a finite \'etale cover of $\widetilde{X}=A\times S$, we necessarily have $Y=B\times T$ with $B$ an abelian variety and $T$ simply connected. Moreover, since $\widetilde{X}$ is the minimal split covering of $X$, the proof of \cite[Proposition 3]{BB-decomp} tells us that the $H$-action on $Y$ realizes $H$ as the normal subgroup of elements in $\Gamma$ acting simultaneously as translation on $B$ and the identity on $T$. As a result, $\widetilde{X}=(B/H)\times T$, so $p$ induces isomorphisms $T\simeq S$ and $B/H\simeq A$.

To finish the proof, it suffices to show that $H$ acts on $Y'$ as translation on $B'$ and the identity on $T'$. Indeed, provided we can show this, we then know that $\widetilde{X}'=(B'/H)\times T'$, hence it is split as desired. To prove that $H$ acts on $Y'$ as stated, we compare it with the $H$-action on $Y=B\times T$, which we already know acts as translation on $B$ and the identity on $T$. Since $\psi$ is an $H$-equivariant map by construction, it induces an $H$-equivariant birational map $\bbar\psi:B'\dasharrow B$ on Albanese varieties. Every rational map of abelian varieties is regular, so $\bbar\psi$ is in fact an isomorphism. Moreover, after suitable choice of origin, it respects the group structure. Given $\gamma\in H$, we know it acts on $B$ as translation $t_z$ by some $z$, so $\gamma$ acts on $B$ as $\bbar\psi^{-1}t_z\bbar\psi$ which is translation by $\bbar\psi^{-1}(z)$. Now, choosing a general point $b\in B'$, $\psi$ induces a birational map on fibers $T'=Y'_b\dasharrow Y_{\psi(b)}=T$ that commutes with the $H$-action. Since each $\gamma\in H$ acts as the identity on $T$, we see that the automorphism $\gamma\colon T'\to T$ agrees with the identity map on a dense open. As a result, it is the identity map, which proves our desired claim.
\end{proof}

\begin{proof}[{Proof of Theorem \ref{thm:MS-reduction-nfolds}}]
Let $X$ be a smooth projective minimal variety over $\bbar\QQ$ 
of Kodaira dimension 0, and let $\phi$ be a birational automorphism 
of $X$. The Abundance Conjecture \ref{conj:abundance} 
tells us that $K_X$ is numerically trivial. Then by 
Proposition~\ref{prop:BB-decomp-over-Qbar}, there exists a minimal 
split cover $\pi:\widetilde{X}\to X$ defined over $\bbar\QQ$. 
By Proposition \ref{prop:BBD-lifting}, $\phi$ lifts to a birational 
automorphism $\widetilde{\phi}$ of $\widetilde{X}$. 
By Lemma~\ref{l:MS-finite-maps}~(c), it is then enough 
to show that Medvedev-Scanlon holds for $\widetilde{\phi}$.
\end{proof}

\section{Proof of Theorem \ref{thm:MS-reduction-3folds}}
\label{subsec:Kod-dim0-3fold--->CY}

Our proof will rely on the following lemma.

\begin{lemma}
\label{lem:Cantat}
Consider the commutative diagram
\[
\xymatrix{
X \ar@{-->}[r]^-{\phi} \ar[d]_{\pi} & X \ar[d]^{\pi} \\
Y \ar@{->}[r]^-{\psi} & Y, }
\]
where $\pi \colon X \to Y$ is a dominant morphism of irreducible varieties, 
$\phi$ is birational isomorphisms of $X$, $\psi$ is an automorphism
of $Y$, and the entire diagram is defined over $\bbar \QQ$. Suppose
$\bbar \QQ(X)^{\phi} = \bbar \QQ$ (and hence, 
$\bbar \QQ(Y)^{\phi} = \bbar \QQ$; see Lemma~\ref{l:MS-finite-maps}(a)),
and there exists a $y \in Y(\bbar \QQ)$ whose $\psi$-orbit is dense in $Y$.
Assume further that either (a) $\pi$ is birational 
or (b) $\phi$ is a (regular) automorphism and
$\dim(X) = \dim(Y) + 1$.
Then there exists an $x \in X(\bbar \QQ)$ whose $\phi$-orbit is dense in $X$.
\end{lemma}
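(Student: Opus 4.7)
The plan is to handle cases (a) and (b) separately. In both, the overall strategy is to lift the dense $\psi$-orbit of $y\in Y$ through $\pi$ to a dense $\phi$-orbit in $X$, using the hypothesis $\bbar\QQ(X)^\phi=\bbar\QQ$ together with the structural input on $\pi$.

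For case (a), where $\pi$ is birational, let $V\subseteq Y$ denote the open locus on which the rational inverse $\pi^{-1}$ is regular, and set $U:=\pi^{-1}(V)\subseteq X$; then $\pi$ restricts to an isomorphism $U\to V$, and on $U$ one has $\phi=\pi^{-1}\circ\psi\circ\pi$. Using density of the $\psi$-orbit, I would replace $y$ by a forward iterate lying in $V$ (the replacement still has dense orbit) and set $x:=\pi^{-1}(y)\in U$. Then whenever $\phi^n$ is defined at $x$, we have $\pi(\phi^n(x))=\psi^n(y)$. Because $\pi$ is birational and dominant, any subset of $X$ whose image under $\pi$ is Zariski dense in $Y$ is itself Zariski dense in $X$ (its Zariski closure has dimension $\geq \dim Y=\dim X$, hence equals $X$); so once we verify that the $\phi$-orbit of $x$ is infinite, density in $X$ is automatic. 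A sufficient condition for $\phi^n$ to be defined at $x$ for all $n$ is that the full forward $\psi$-orbit of $y$ remains in $V$, and the strategy is to arrange this by a further forward shift, exploiting that $Y\setminus V$, the image of the exceptional locus of $\pi$, is ``small'' (of codimension $\geq 2$ in the smooth settings in which the lemma is applied).

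For case (b), where $\phi$ is a regular automorphism and $\pi$ has one-dimensional fibers, every $\phi$-orbit is defined at every iterate. After replacing $y$ by a suitable forward iterate, we may assume the fiber $X_y:=\pi^{-1}(y)$ is a smooth curve. For each $x_0\in X_y(\bbar\QQ)$ let $Z_{x_0}$ denote the Zariski closure of the $\phi$-orbit of $x_0$, a $\phi$-invariant closed subvariety of $X$. If $Z_{x_0}\cap X_y$ is infinite, then by irreducibility $Z_{x_0}\supseteq X_y$, hence $Z_{x_0}\supseteq\bigcup_{n\geq 0}\phi^n(X_y)=\bigcup_{n\geq 0}X_{\psi^n(y)}$, which is dense in $X$; so $Z_{x_0}=X$ and $x_0$ already has dense orbit. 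We may therefore assume $Z_{x_0}\cap X_y$ is finite for every $x_0$, and suppose for contradiction that no such $x_0$ has dense $\phi$-orbit. Since $\pi|_{Z_{x_0}}$ is dominant onto $Y$ while $Z_{x_0}$ is a proper subvariety of $X$, each $Z_{x_0}$ has dimension $\dim Y$ and $\pi|_{Z_{x_0}}$ is generically finite. The key step is to package the assignment $x_0\mapsto Z_{x_0}$ as a rational map from $X_y$ into a Chow variety $\mathcal Z$ parametrizing codimension-one subvarieties of $X$ of bounded degree; this map is non-constant, since distinct $x_0$ lie in distinct orbit closures (their intersections with $X_y$ are different finite sets containing $x_0$). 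Extending $\phi$-equivariantly (setting $Z_{\phi(x)}:=Z_x$ and then extending by Zariski continuity from the dense subset $\bigcup_n X_{\psi^n(y)}$) produces a rational map $X\dashrightarrow \mathcal Z$ that is constant on $\phi$-orbits; pulling back a non-constant rational function on $\mathcal Z$ gives a non-constant $\phi$-invariant rational function on $X$, contradicting $\bbar\QQ(X)^\phi=\bbar\QQ$.

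The main technical obstacle in case (a) is controlling the indeterminacy of iterates of $\phi$ along the lifted orbit: one must show that the forward $\psi$-orbit of $y$ can be arranged to avoid the bad locus $Y\setminus V$ from some stage onward, despite the possibility that a priori the given dense orbit returns to it infinitely often. In case (b), the main subtlety is packaging the varying orbit closures $\{Z_{x_0}\}$ into a single rational map to a Chow variety of bounded degree — once this parametrization is made precise, the contradiction with the no-rational-fibration hypothesis is clean.
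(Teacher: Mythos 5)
There are genuine gaps in both halves. In case (a), the step ``once the $\phi$-orbit of $x$ is infinite, density in $X$ is automatic'' is not valid: the image under $\pi$ of the defined part of the orbit is only some infinite subset of the $\psi$-orbit of $y$, and an infinite subset of a dense orbit need not be dense. Your proposed remedy---shift $y$ forward so that the entire forward $\psi$-orbit avoids $Y\setminus V$---cannot be arranged in general: a dense orbit may return to a proper closed subset infinitely often, and no forward shift changes that (this is exactly the obstacle you flag but do not resolve). The point you are missing is that, with the paper's convention that $\OO_\phi(x)$ consists of those $\phi^n(x)$ for which $\phi^n$ is defined at $x$, you never need the whole forward orbit to stay in $V$: for every $n$ with $\psi^n(y)\in V$ the rational map $\phi^n$ agrees near $x$ with the morphism $\pi^{-1}\circ\psi^n\circ\pi$, hence is defined at $x$ with $\phi^n(x)=\pi^{-1}(\psi^n(y))$; since the orbit points lying in the dense open set $V$ are already dense in $Y$, their preimages under the isomorphism $V\simeq U$ are dense in $X$. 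This is the paper's (short) argument, and it requires no control of the indeterminacy along the remaining iterates.

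In case (b) your route is genuinely different from the paper's, but its central construction is not justified. To map $X_y\dashrightarrow\mathcal Z$ you need (i) a uniform bound on the degrees of the orbit closures $Z_{x_0}$ as $x_0$ varies, and (ii) that $x_0\mapsto Z_{x_0}$ is an algebraic (constructible) family; neither is available---orbit closures of an automorphism do not vary in algebraic families in any general sense, and ``extending by Zariski continuity'' from the countable union $\bigcup_n X_{\psi^n(y)}$ is not a legitimate construction (a map defined on a countable dense set does not determine a rational map). Over a countable field such as $\bbar\QQ$ this is precisely where naive arguments break down. The paper's proof of (b) avoids all of this with one input: by \cite[Theorem 1.2]{Dixmier} (using $\bbar\QQ(X)^{\phi}=\bbar\QQ$), $X$ has only finitely many $\phi$-invariant codimension-one subvarieties; letting $H$ be their union, one picks an orbit point $y$ whose fiber is not contained in $H$ and a point $x\in\pi^{-1}(y)(\bbar\QQ)$ with $x\notin H$. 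Then the orbit closure $Z$ of $x$ dominates $Y$, so $\dim Z\geq\dim X-1$, and $Z$ cannot have codimension one (it is $\phi$-invariant, so it would lie in $H$, contradicting $x\notin H$); hence $Z=X$. I would suggest replacing your Chow-variety scheme by this finiteness-of-invariant-divisors argument, or else supplying a genuine proof of the boundedness and algebraicity claims, without which the proposal does not stand.
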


\begin{proof}
(a) Suppose $\pi$ restricts to an isomorphism between dense open subsets $X_0$
of $X$ and $Y_0$ of $Y$. After replacing $y$ by an iterate, we may 
assume that $y \in Y_0$. We claim that the preimage $x \in X_0$ of $y$
has a dense $\phi$-orbit in $X$. Indeed, set $y_n := \psi^n(y) \in Y$. 
Then there is a sequence $i_1 \leqslant i_2 \leqslant \dots$ such 
that the points $y_{i_1}, y_{i_2}, \dots, $ all lie in $Y_0$ are
are dense in $Y$. Then $x_n : = \phi^n(x)$ are well defined for 
$n = i_1, i_2, \dots$ and are dense in $X$.  This proves the claim.

(b) By \cite[Theorem 1.2]{Dixmier}, 
$X$ has only finitely many $\phi$-invariant codimension $1$
subvarieties.  Denote their union by $H \subset X$. Once again,
set $y_n := \psi^n(y) \in Y$.  The union of the fibers $\pi^{-1}(y_n)$,
as $n$ ranges over the non-negative integers, is dense in $X$. Hence, 
one of these fibers is not contained in $H$. After replacing
$y$ by an iterate, we may assume that $\pi^{-1}(y) \not \subset H$. 
Choose a $\bbar \QQ$-point $x \in \pi^{-1}(y)$ which does not lie in $H$. 
We claim that the $\phi$-orbit of $x$ is dense in $X$. Indeed,
denote Zariski closure of the orbit of $x$ by $Z$. By our construction
$\pi(Z)$ contains the $\psi$-orbit of $y$ and thus is dense in $Y$.
Hence, $\dim(Y) \leqslant \dim(Z) \leqslant \dim(X) = \dim(Y) + 1$.
On the other hand, since $x \not \in H$, $Z$ cannot be a hypersurface in $Y$.
Thus $\dim(Z) = \dim(X) = \dim(Y) + 1$, i.e., $Z = X$, as desired.
\end{proof}

We now proceed with the proof of Theorem \ref{thm:MS-reduction-3folds}.
Since the abundance conjecture is known for threefolds \cite{abundance-3folds}, we can apply Theorem \ref{thm:MS-reduction-nfolds}. Thus, 
the Medvedev-Scanlon Conjecture \ref{conj:MS} holds for all smooth 
projective minimal threefolds of Kodaira dimension 0 if and only if 
it holds for products of Calabi-Yau varieties, hyperk\"ahler varieties, 
and abelian varieties over $\bbar\QQ$. We are therefore reduced to three 
possibilities: (i) $X$ is an abelian threefold, (ii) $X$ is a product
$E\times S$, where $E$ is an elliptic curve and $S$ is a K3 surface, 
or (iii) $X$ is a smooth Calabi-Yau $3$-fold. 
The Medvedev-Scanlon conjecture holds in case (i) by \cite{MS-ab-var}.
The main result of this section,
Proposition~\ref{prop:ExS-prod-birat-aut}, asserts that 
Conjecture~\ref{conj:MS} also holds in case (ii). This will leave us 
with case (iii), thus completing the proof of 
Theorem \ref{thm:MS-reduction-3folds}.

\begin{lemma} \label{lem.ExS}
Suppose $X=E\times S$, where $E$ an elliptic curve and $S$ is 
a smooth minimal surface with trivial Albanese and $\kappa(S)\geq 0$.  
Every birational isomorphism $\phi \colon X \dasharrow X$
is of the form  $\phi=\phi_E\times\phi_S$ with $\phi_E$ an automorphism 
of $E$ and $\phi_S$ an automorphism of $S$. In particular, 
every birational isomorphism of $X$ is regular.
\end{lemma}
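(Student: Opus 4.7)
The plan is to use the Albanese map to reduce to a fiberwise problem over $E$, and then invoke the discreteness of the automorphism group scheme of $S$ to show that the resulting family of automorphisms is constant.

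Since $\Alb(S)=0$, we have $\Alb(X)=E$, and up to translation the Albanese morphism $X\to\Alb(X)$ is the projection $p_E\colon X\to E$. Because the Albanese is a birational invariant of smooth projective varieties, $\phi$ induces a birational self-map of $E$, which is automatically biregular since $E$ is an abelian variety; call this regular automorphism $\phi_E$. Functoriality gives $p_E\circ\phi=\phi_E\circ p_E$, so after replacing $\phi$ by $(\phi_E^{-1}\times\id_S)\circ\phi$, I may assume $p_E\circ\phi=p_E$, i.e., $\phi$ is a birational self-map of $X$ over $E$. Restricting to the generic fiber, $\phi$ becomes a birational self-map of $S_{k(E)}:=S\times_k\spec k(E)$ over $k(E)$. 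Since $S$ is smooth minimal with $\kappa(S)\geq 0$, so is $S_{k(E)}$, and the classical minimal model theorem for surfaces ensures every such birational self-map is a biregular automorphism. Spreading out yields a morphism $\mu\colon U\to\aut(S)$ on some dense open $U\subseteq E$, given by $e\mapsto\phi|_{\{e\}\times S}$, where $\aut(S)$ denotes the automorphism group scheme of $S$.

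The decisive step is to show that $\mu$ is constant, so that $\phi|_{\{e\}\times S}=\phi_S$ for a fixed $\phi_S\in\aut(S)$; after the first reduction this gives $\phi=\id_E\times\phi_S$, and hence the original $\phi$ equals $(\phi_E\times\id_S)\circ(\id_E\times\phi_S)=\phi_E\times\phi_S$, with both factors regular. For this I would appeal to the triviality of $\aut^0(S)$, which makes $\aut(S)$ a disjoint union of reduced points, so that any morphism from the connected variety $U$ into it is forced to be constant. For the case actually used in the proof of Theorem \ref{thm:MS-reduction-3folds}, where $S$ is a K3 surface, this is immediate from the tangent-sheaf computation $H^0(S,T_S)\cong H^0(S,\Omega^1_S)=0$, using $K_S\cong\O_S$ and $q(S)=0$. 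In the more general setting permitted by the lemma, which also covers properly elliptic surfaces with $q=0$ and surfaces of general type with $q=0$, one additionally invokes the classification of surfaces: a positive-dimensional $\aut^0(S)$ on a minimal surface with $\kappa\geq 0$ forces $S$ to be abelian or bielliptic, contradicting $\Alb(S)=0$. Establishing $\aut^0(S)=1$ in this full generality is the main technical content of the argument.
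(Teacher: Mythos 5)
Your outline follows the same skeleton as the paper's proof: reduce via the Albanese projection to the case $\phi_E=\id_E$, observe that since $S$ is the unique minimal model in its birational class every fibre restriction is an automorphism of $S$, and then show the resulting family of automorphisms is constant. The difference is in how constancy is obtained. You pass to the generic fibre and spread out, so your family $\mu$ is only defined on a dense open $U\subseteq E$; since $U$ is not proper, you are forced to prove the stronger statement $\aut^0(S)=1$. The paper instead defines the family on all of $E$ (the indeterminacy locus of $\phi$ has codimension $\geq 2$, so it meets every fibre in codimension $\geq 1$ and each $\phi|_{X_t}$ is a birational, hence biregular, self-map of $S$), normalizes so that one value is the identity, and then only needs that $\aut^0(S)$ is \emph{affine} --- which is exactly \cite[Corollary 5.8]{fujiki-lieberman} for trivial Albanese --- since any morphism from the proper connected curve $E$ to an affine group is constant. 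So the paper's route trades your discreteness claim for properness of the base.

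Your claim $\aut^0(S)=1$ is in fact true under the hypotheses, but the justification you sketch is not right as stated: it is false that a minimal surface with $\kappa(S)\geq 0$ and positive-dimensional $\aut^0(S)$ must be abelian or bielliptic. Isotrivial properly elliptic surfaces, e.g.\ $E'\times C$ with $E'$ elliptic and $g(C)\geq 2$, are minimal with $\kappa=1$ and $\aut^0\supseteq E'$. The argument is easily repaired, since all such examples have nonzero Albanese, which is the contradiction you actually invoke; a cleaner route avoiding the classification is: $\kappa(S)\geq 0$ implies $S$ is not uniruled, so $\aut^0(S)$ contains no copy of $\GG_a$ or $\GG_m$ and is therefore an abelian variety, while trivial Albanese forces $\aut^0(S)$ to be affine by \cite[Corollary 5.8]{fujiki-lieberman}; hence $\aut^0(S)=1$. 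With that repair (or simply by defining the family on all of $E$ as the paper does and using affineness alone), your argument is complete.
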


\begin{proof}
The projection $\pi:X\to E$ is the Albanese map for $X$. Thus
$\phi$ induces a birational automorphism $\phi_E$ of $E$ such 
that $\pi\circ\phi=\phi_E\circ\pi$. 
Since $E$ is a smooth curve,
$\phi_E$ is an automorphism of $E$. Replacing $\phi$ by 
$\phi\circ(\phi_E^{-1},\id_S)$, we see that to prove 
the lemma, we may assume $\phi_E=\id_E$.

Since $X$ is smooth, the indeterminacy locus $I(\phi)$ of $\phi$ 
has codimension at least $2$, and so $I(\phi) \cap X_t$ has codimension 
at least $1$ for all $t\in E$. We therefore obtain a map 
$f \colon E\to \bir(S)$ given by $t \mapsto \phi|_{X_t}$. Since 
$\kappa(S) \geq 0$, $S$ is not ruled, so $S$ is a unique smooth minimal
surface in its birational class, and 
$\bir(S) = \aut(S)$, see for example \cite[Theorem V.19]{beauville-surfaces}. 
Our goal is to show that the resulting map $f \colon E \to \aut(S)$ 
is constant. Choose a point $t_0 \in E$ and let $\sigma := f(t_0) \in \aut(S)$.
After composing $\phi$ with 
$(1, \sigma^{-1}) \colon E \times S \to E \times S$, we may assume
that $f(t_0) = 1 \in \aut(S)$. Since $E$ is irreducible, this implies that
the image of $f$ lies in $\aut^0(S)$.  Since $S$ has trivial Albanese, 
by \cite[Corollary 5.8]{fujiki-lieberman}, $\aut^0(S)$ is an affine 
algebraic group. Thus, $f$ must be a constant map, as claimed.
We now define $\phi_S$ to be the image of this map.
\end{proof}

\begin{proposition}
\label{prop:ExS-prod-birat-aut}
Suppose $X=E\times S$, were $E$ an elliptic curve and $S$ is 
a surface with trivial Albanese and $\kappa(S)\geq 0$. 
Let $\phi \colon X \dasharrow X$ be a birational isomorphism such that
$\bbar \QQ(X)^{\phi} = \bbar \QQ$.
Then Conjecture~\ref{conj:MS} holds for $(X, \phi)$.
\end{proposition}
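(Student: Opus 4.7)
The plan is to exploit the product structure $X = E \times S$ forced by Lemma~\ref{lem.ExS} to reduce the problem to the already-settled surface case of the Medvedev--Scanlon conjecture, and then to promote the dense orbit obtained on $S$ back up to a dense orbit on $X$ via Lemma~\ref{lem:Cantat}(b). Concretely, I would first invoke Lemma~\ref{lem.ExS} to write $\phi = \phi_E \times \phi_S$ with $\phi_E \in \aut(E)$ and $\phi_S \in \aut(S)$; the crucial consequence, for the lifting step below, is that $\phi$ is now known to be a \emph{regular} automorphism of $X$, not merely a birational one.

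Next, I would use the projection $\pi_S \colon X \to S$, which is $\phi_S$-equivariant in the sense that $\pi_S \circ \phi = \phi_S \circ \pi_S$, to transfer the nontrivial-fibration condition down to the surface factor. Since $\pi_S^*$ injects $\bbar\QQ(S)$ into $\bbar\QQ(X)$ and carries $\phi_S$-invariants to $\phi$-invariants, the hypothesis $\bbar\QQ(X)^\phi = \bbar\QQ$ immediately gives $\bbar\QQ(S)^{\phi_S} = \bbar\QQ$, so $\phi_S$ preserves no nontrivial fibration on $S$. The Medvedev--Scanlon conjecture for regular automorphisms of projective surfaces \cite[Theorem~1.3]{BGT-2} then supplies a point $s \in S(\bbar\QQ)$ whose $\phi_S$-orbit is Zariski dense in $S$.

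Finally, I would apply Lemma~\ref{lem:Cantat}(b) to the commutative square with vertical maps $\pi_S$, top map $\phi$, and bottom map $\phi_S$: the map $\phi$ is a regular automorphism by the first step, $\dim X = \dim S + 1 = 3$, the hypothesis $\bbar\QQ(X)^\phi = \bbar\QQ$ is part of the data, and the previous step supplies a $\bbar\QQ$-point of $S$ with dense $\phi_S$-orbit. The lemma then produces a point $x \in X(\bbar\QQ)$ whose $\phi$-orbit is Zariski dense in $X$, as the proposition demands. Since each step is a direct invocation of a result that is either established earlier in the paper or cited from \cite{BGT-2}, no serious obstacle is expected; the one point meriting care is confirming that $\phi$ is actually regular, so that the $\dim(X) = \dim(Y)+1$ branch of Lemma~\ref{lem:Cantat}(b) is available rather than the birational branch, and this regularity is precisely what Lemma~\ref{lem.ExS} has already provided.
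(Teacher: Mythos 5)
Your overall strategy---split $\phi$ into a product using Lemma~\ref{lem.ExS}, apply \cite[Theorem~1.3]{BGT-2} to the surface factor, and lift the dense orbit through the projection using the $\dim(X)=\dim(Y)+1$ branch of Lemma~\ref{lem:Cantat}(b)---is exactly the core of the paper's proof. However, there is one genuine gap at the very first step: Lemma~\ref{lem.ExS} requires $S$ to be a \emph{smooth minimal} surface, whereas Proposition~\ref{prop:ExS-prod-birat-aut} only assumes $S$ has trivial Albanese and $\kappa(S)\geq 0$. For non-minimal $S$ the conclusion you need fails: e.g.\ if $S$ is the blow-up of a K3 surface $S_0$ at a point $p$ and $\sigma\in\aut(S_0)$ does not fix $p$, then $\id_E\times\sigma$ induces a birational self-map of $E\times S$ that is neither regular nor of the form $\phi_E\times\phi_S$ with $\phi_S\in\aut(S)$ (the induced map on $S$ has an indeterminacy point over $\sigma^{-1}(p)$). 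Since both the application of \cite[Theorem~1.3]{BGT-2} (which is for regular automorphisms) and the regularity hypothesis in Lemma~\ref{lem:Cantat}(b) hinge on this splitting, your argument as written only proves the proposition when $S$ is already minimal.

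The repair is exactly what the paper does: let $\pi\colon S\to S_{\min}$ be the minimal model (trivial Albanese and $\kappa\geq 0$ are birational invariants), view $\phi$ as a birational self-map of $E\times S_{\min}$, and apply Lemma~\ref{lem.ExS} \emph{there} to write it as a regular automorphism $\phi_E\times\phi_{\min}$. Your second and third paragraphs then go through verbatim with $S_{\min}$ in place of $S$, producing a $\bbar\QQ$-point of $E\times S_{\min}$ with dense $(\phi_E\times\phi_{\min})$-orbit; finally, one more application of Lemma~\ref{lem:Cantat}, this time branch (a) applied to the birational morphism $\id_E\times\pi\colon E\times S\to E\times S_{\min}$ (with $\phi$ birational upstairs and the automorphism $\phi_E\times\phi_{\min}$ downstairs), transfers the dense orbit back to $X=E\times S$. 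With that extra reduction your proof coincides with the paper's.
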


\begin{proof}
Let $\pi \colon S \to S_{min}$ be the minimal model of $S$.
By Lemma~\ref{lem.ExS}, $\phi$ descends to 
an automorphism $E \times S_{min} \to E \times S_{min}$
of the form $(\phi_E, \phi_{min})$, where $\phi_E$ is an automorphism of $E$
and $\phi_{min}$ is an automorphism of $S_{min}$. Now
consider the commutative diagram
\[
\xymatrix{
E \times S  \ar@{-->}[rr]^-{\phi} \ar[d]_{\id \times \pi} & & E \times S 
\ar[d]_{\id \times \pi} \\
E \times S_{min}  \ar@{->}[rr]^--{\phi_E \times \phi_{min}} 
\ar[d]_{\text{pr}} & & E \times S_{min} \ar[d]_{\text{pr}} \\
S_{min} \ar@{->}[rr]^-{\phi_{min}} & & S_{min}, } \]
By~\cite[Theorem~1.3]{BGT-2} the Medvedev-Scanlon conjecture 
holds for the automorphism
$\phi_{min}$ of the surface $S_{min}$. By Lemma~\ref{lem.ExS}(b), 
$E \times S_{min}$ has a $\bbar \QQ$-point with a dense 
$(\phi_E,  \phi_{min})$-orbit. Applying
Lemma~\ref{lem.ExS}(a), we conclude that the automorphism
$E \times S$ has a $\bbar \QQ$-point with a dense $\phi$-orbit, as desired.
\end{proof}

\section{Pseudo-automorphisms that preserve a line bundle}

The following result will be used in the proof of 
Theorem~\ref{thm:MS-CY3} in the next section. 

\begin{proposition} \label{prop.big}
Suppose $\phi \colon X \dasharrow X$ is pseudo-automorphism of a smooth
projective variety defined over a field $k$ of characteristic $0$,
$L$ is a line bundle such that $\phi^*(L) \simeq L$, and $Y$ is the closure of the image of the natural rational map 
$i \colon X \dasharrow \PP H^0(X, L)$
for large $n$. Here, as usual $H^0(X, L)$
denotes the finite-dimensional space of global sections of $L$, 
and $\PP H^0(X, L)$ is the associated projective space. Then

\smallskip
(a) $\phi$ induces a linear automorphism $\bar{\phi}$ of 
the projective space $\PP H^0(X, L)$ preserving $Y$.

\smallskip
\noindent
Moreover, assume $k(X)^{\phi} = k$. Then

\smallskip
(b) there is a dense $\bar{\phi}$-invariant subset $U$ of $Y$ such that
the $\bar{\phi}$-orbit of $y$ is dense in $Y$ for every $y \in U$,

\smallskip
(c) $Y$ is a rational variety over the algebraic closure $\overline{k}$.
\end{proposition}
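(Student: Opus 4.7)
The plan is to treat (a), (b), (c) in order: (a) is a direct construction, (b) reduces to a Rosenlicht-type statement for a commutative algebraic group acting on $Y$, and (c) reads off rationality from the structure theory of commutative linear algebraic groups.

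For (a), since $\phi$ is a pseudo-automorphism, it restricts to an isomorphism between open subsets of $X$ whose complements have codimension $\geq 2$. Together with $\phi^*L\simeq L$ and the fact that $L$ is a line bundle on a smooth variety, Hartogs' theorem lets $\phi^*$ act as a linear automorphism of $H^0(X,L)$, well-defined up to a scalar, which descends to $\bar\phi\in\PGL(H^0(X,L))$. The identity $\bar\phi\circ i = i\circ\phi$ of rational maps is then routine, and immediately gives $\bar\phi(Y)=Y$.

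For (b), let $G\subseteq\PGL(H^0(X,L))$ be the Zariski closure of $\langle\bar\phi\rangle$. As the closure of a commutative subgroup, $G$ is a commutative linear algebraic group, and it preserves $Y$ because the stabilizer of $Y$ in $\PGL(H^0(X,L))$ is closed and contains $\bar\phi$. The dominant rational map $i$ induces a $\phi$-equivariant embedding $i^*\colon k(Y)\hookrightarrow k(X)$, so
\[
k(Y)^G \subseteq k(Y)^{\bar\phi} \subseteq k(X)^{\phi} = k.
\]
By Rosenlicht's theorem, the generic $G$-orbit on $Y$ has dimension $\dim Y$; irreducibility of $Y$ then forces a unique open dense $G$-orbit $U\subseteq Y$, which is automatically $\bar\phi$-invariant. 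To show each $y\in U$ has dense forward $\bar\phi$-orbit, set $S := \overline{\{\bar\phi^n : n\geq 0\}}\subseteq G$. This is a closed subsemigroup containing the identity; for each $h\in S$, the descending chain $S\supseteq hS\supseteq h^2S\supseteq\cdots$ of closed subsets of $G$ stabilizes by Noetherianity, yielding $S=hS$ and hence $h^{-1}\in S$. Thus $S$ is a closed subgroup of $G$ containing $\bar\phi$, so $S=G$. Pulling back along the orbit map $g\mapsto g\cdot y$, the preimage of the Zariski closure of the forward orbit is closed in $G$ and contains $\{\bar\phi^n : n\geq 0\}$, hence contains $S=G$; therefore the closure of the forward orbit contains $G\cdot y = U$ and equals $Y$.

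For (c), fix $y\in U$ and pass to the identity component $G^0\subseteq G$. The $G^0$-orbit of $y$ has dimension $\dim Y$, hence is open and dense in $Y$, and $G^0/\stab_{G^0}(y)$ is a connected commutative linear algebraic group. Over $\bar k$ in characteristic zero, such a group is isomorphic to $\GG_m^a\times\GG_a^b$, which is rational; since rationality depends only on the function field, $Y$ is rational over $\bar k$. The main obstacle I anticipate is the careful justification of (a)---verifying that the pseudo-automorphism hypothesis (exceptional locus of codimension $\geq 2$) is exactly what allows $\phi^*$ to act on $H^0(X,L)$, and that the induced linear action is compatible with $i$. Once (a) is established, Rosenlicht's theorem and the structure theory for commutative linear algebraic groups deliver (b) and (c) with little further work.
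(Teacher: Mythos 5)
Your proof is correct and follows essentially the same route as the paper: construct $\bar{\phi}$ from $\phi^*$ acting on $H^0(X,L)$ (you via Hartogs extension across the codimension-$\geq 2$ locus, the paper via an explicit divisor/linear-equivalence argument --- the same construction in substance), then pass to the Zariski closure of $\langle\bar{\phi}\rangle$ in $\PGL(H^0(X,L))$, invoke Rosenlicht's theorem to produce a dense orbit, and use the structure theory of connected commutative linear algebraic groups in characteristic zero for rationality of $Y$. Your closed-subsemigroup argument identifying the forward $\bar{\phi}$-orbit closure with the $G$-orbit closure nicely fills in a step the paper only asserts, but it does not constitute a different approach.
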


Note that since $\phi$ is a pseudo-automorphism, it induces an
automorphism $\phi^* \colon \Pic(X) \to \Pic(X)$.

\begin{proof}
(a) We begin with the following preliminary observation.
Suppose $L$ and $L'$ are isomorphic line bundles on a complete variety
$X$ defined over $k$.  We claim that there is a canonically defined linear
isomorphism between the finite-dimensional projective spaces
$\PP H^0(X, L)$ and $\PP H^0(X, L')$. To define this linear isomorphism,
write $L = \mathcal{O}_X(D)$ and $L' = O_X(D')$, where
$D$ and $D'$ are divisors on $X$. Since $L$ and $L'$ are isomorphic,
these divisors are linearly equivalent. That is,
\begin{equation} \label{e.divisors}
D' = D + (f), 
\end{equation}
where $(f)$ denotes the divisor associated
to a rational function $f \in k(X)$. Once $f$ is chosen,
we can define an isomorphism of vector spaces
$H^0(X, L) \to H^0(X, L')$ given by $\alpha \mapsto f \alpha$. 
The rational function $f$ in~\eqref{e.divisors} 
is uniquely determined by $L$ and $L'$ up to 
a non-zero scalar factor.
The isomorphism of projective spaces $\PP H^0(X, L) \to \PP H^0(X, L')$
thus defined depends only on $L$ and $L'$ and  
not on the choice of $f$.  This proves the claim.

We now apply this claim in the setting of the proposition,
with $L' := \phi^*(L)$. The line bundles $L$ and $L'$  
are isomorphic by our assumption. On the other hand,
$\phi$ induces an isomorphism
\[ \phi^*: H^0(X, L)  \to H^0(X, L') \] 
via pull-back. Composing with the inverse of the linear isomorphism
$\PP H^0(X, L) \to \PP H^0(X, L')$ constructed above,
we obtain a desired automorphism 
$\bar{\phi} \colon \PP H^0(X, L) \to \PP H^0(X, L)$ such that the diagram   
\[ \xymatrix{ X \ar@{-->}[r]^-\phi \ar@{-->}[d]_i & X \ar@{-->}[d]^i \\
    \PP H^0(X, L) \ar[r]^-{\bar{\phi}} & \PP H^0(X, L) }
\]
commutes.

(b) Let $Y$ be the closure of image of $X$ in $\PP(V)$ under 
$i$, where $V : = H^0(X, L)$. Since $k(X)^{\phi} = k$,
clearly $k(Y)^{\phi} = k$ as well. 

Set $G$ to be the subgroup
of $\PGL(V)$ consisting of automorphisms of $\PP(V)$ which preserve
$Y$. Then $\bar{\phi} \in G$, and $G$ is a closed subgroup of 
$\PGL(V)$ and hence, a linear algebraic group. Let $G_0$ be 
the  Zariski closure of the subgroup generated 
by $\bar{\phi}$ inside $G$. Then $G_0$ is an abelian linear algebraic group.
Moreover, for any $y \in Y$, the orbit of $y$ under $\phi$ has the same
closure in $Y$ as the orbit of $y$ under $G_0$. So, it suffices to show that 
there is a dense open subset $U \subset Y$ such that every $y \in U$ has
a dense orbit under $G_0$. The last assertion is a consequence of
Rosenlicht's theorem; see~\cite[Theorem 2]{rosenlicht}, \emph{cf}.~also
\cite[Theorem 1.1]{dynamical-Rosenlicht} 
and~\cite[Proposition 7.4(1)]{Dixmier}; in fact, we can take $U$ to 
be a dense $G_0$-orbit in $Y$.

(c) Since $U$ is a $G_0$-orbit, it is isomorphic to the homogeneous
space $G_0/H_0$, for some subgroup $H_0 \subset G_0$. Since $G_0$ 
is abelian, $H_0$ is normal in $G_0$. Hence, as a variety, $U$ is 
isomorphic to the abelian linear algebraic group $G_0/H_0$.
Every abelian linear irreducible algebraic group 
over $\overline{k}$ is isomorphic to a direct 
product of copies of $\mathbb G_a$ and $\GG_m$; we conclude that
$U$ is rational over $\overline{k}$ and hence, so is $Y$.
\end{proof}

\section{Proof of Theorem \ref{thm:MS-CY3}}
\label{sect.MS-CY3(1)}

Let $X$ be a minimal threefold with $K_X$ torsion. 
Then by Remark \ref{rmk:pseudo-aut}, $\phi$ 
is a pseudo-automorphism, i.e.~neither $\phi$ nor $\phi^{-1}$ 
contracts a divisor. As a result, $\phi$ induces an automorphism 
of the nef cone $\Nef(X)$. Every smooth minimal threefold $Y$ 
with $c_2(Y) = 0$ has an \'etale cover by an abelian variety, 
so if $X$ is a Calabi-Yau variety, hence simply connected, we must 
have $c_2(X) \neq 0$. 
As mentioned in the introduction, a theorem of Miyaoka \cite{Miyaoka} 
then tells us that $c_2(X)$ is positive on the ample cone 
$\Amp(X)$ and non-negative on $\Nef(X)$. We first consider 
the case where $c_2(X)$ is strictly positive on the nef cone. 
This approach is based on arguments given in Chapter 4 
of~\cite{wild-aut-thesis}.

\begin{lemma}
\label{l:compact-cone}
Suppose $\ell: \RR^n \to \RR$ is a linear function and $C$ is 
a closed cone in $\RR^n$ such that $\ell(z) > 0$ for any $z \in C$ 
other than the origin. Then for any real number $M \geq 0$, 
the region $C_M := \{ z \in C \mid \ell(z) \leq M\}$ is compact.
\end{lemma}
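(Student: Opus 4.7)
The plan is to prove compactness by showing $C_M$ is both closed and bounded in $\RR^n$, then invoking Heine--Borel. Closedness is immediate: $C_M$ is the intersection of the closed cone $C$ with the closed half-space $\{z \in \RR^n \mid \ell(z) \leq M\}$. So the substance of the argument lies in showing that $C_M$ is bounded.

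To show boundedness, I would first cut $C$ down by the unit sphere. Let $S := \{z \in \RR^n \mid \|z\| = 1\}$, where $\|\cdot\|$ denotes the Euclidean norm, and set $K := C \cap S$. Since both $C$ and $S$ are closed and $S$ is bounded, $K$ is compact. The linear function $\ell$ is continuous, so it attains its minimum $m := \min_{z \in K} \ell(z)$ on $K$. By hypothesis, $\ell$ is strictly positive on $C \setminus \{0\}$, and in particular on $K$ (which does not contain the origin), so $m > 0$.

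Now let $z \in C_M$ with $z \neq 0$. Since $C$ is a cone, $z/\|z\| \in K$, and by linearity
\[
\ell(z) = \|z\| \cdot \ell\!\left(\frac{z}{\|z\|}\right) \geq m \cdot \|z\|.
\]
Since $\ell(z) \leq M$, this yields $\|z\| \leq M/m$. Together with $0 \in C_M$, this shows $C_M$ is contained in the closed ball of radius $M/m$, hence bounded. Combined with closedness, this proves $C_M$ is compact.

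The argument is entirely standard convex-geometric and I do not anticipate any genuine obstacle; the only subtle point is the (trivial) observation that $K$ is nonempty only if $C \neq \{0\}$, but if $C = \{0\}$ then $C_M = \{0\}$ is trivially compact and the argument reduces to a vacuous statement. One could alternatively phrase the proof using the fact that a closed convex cone with $\ell > 0$ off the origin is strictly supported by the hyperplane $\ell = 0$ at the origin, but the direct scaling argument above is cleanest.
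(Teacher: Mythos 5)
Your proof is correct and follows essentially the same route as the paper: both arguments restrict to the compact set $C \cap S$ (the cone cut by the unit sphere) and use compactness there to produce a uniform bound on $\|z\|$ for $z \in C_M$. The only cosmetic difference is that you minimize the linear function $\ell$ on $C \cap S$ and scale, whereas the paper maximizes the length of the ray segments lying in the strip $0 \leq \ell \leq M$; the underlying idea is identical, and your handling of the degenerate case $C = \{0\}$ and of closedness is fine.
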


\begin{proof}
Let $S$ be the intersection of $C$ with the unit sphere. Clearly $S$ is compact. Define the function $f: S \to \RR$ given as follows. For $p \in S$, let $I_p$ be the intersection of the line through $p$ and the origin with the strip $0 \leq \ell(z) \leq M$. Since $\ell$ is positive on $C$, $I_p$ is an interval of finite length. Let $f(p)$ be the length of $I_p$. Since $f$ is continuous and $S$ is compact, $f$ attains its maximal value $r$ on $S$. Consequently, $C_M$ is contained in the ball of radius $r$ centered at the origin. Thus $C_M$ is closed and bounded, hence compact.
\end{proof}


\begin{proof}[{Proof of Theorem \ref{thm:MS-CY3}}]
(1) Since $c_2(X)$ is strictly positive on $\Nef(X)$, Lemma \ref{l:compact-cone} 
shows that for all $M\geq0$, the region $\{D\in\Nef(X)\mid c_2(X)\cdot 
D\leq M\}$ is compact. As a result, $c_2(X)$ achieves a minimum positive 
value on $\Pic(X) \cap \Amp(X)$ and this value is achieved 
by only finitely many $D_i$. Taking the sum of these finitely 
many $D_i$, we obtain an ample class $A$ which is fixed 
by $\phi^*$. Let $\M$ be an ample line bundle representing 
the class of $A$. Since the Albanese of $X$ is trivial, rational 
equivalence is the same as linear equivalence. Since 
$\phi^*A = A$ in $\NS(X)\otimes\CC$, we have 
$\phi^*\M \simeq \M\otimes\N$ where $\N$ is a torsion line bundle. 
Replacing $A$ by a scalar multiple, we may assume that $\phi^*(A)$ 
is isomorphic to $A$ and that $A$ is very ample. If $\phi$ preserves a rational fibration, we are done. Otherwise, with notation as 
in Proposition~\ref{prop.big}(b), there is a dense set of 
$y\in Y$ with dense orbit under $\bar\phi$. However, $A$ is 
very ample, so $Y=X$ which gives the desired conclusion.

(2) We will now consider the case where there is a semi-ample 
divisor $D \neq 0$ on $X$ such that $c_2(X)\cdot D=0$. 
Let $\pi:X\to Y$ be the associated $c_2$-contraction. Oguiso 
shows (\cite[Theorem 4.3]{semi-ampleness-conj}) that there are only 
finitely many $c_2$-contractions, and so after replacing $\phi$ by 
a further iterate, we can assume $\phi^*[D] = [D]$. By 
Proposition~\ref{prop.big}(a),
$\phi$ descends to an automorphism $\bbar{\phi}$ of $Y$.
Since $D$ is non-zero, $Y$ is not a point. 
We now consider three cases. 

\smallskip
{\bf Case 1:} $\dim(Y) = 3$, i.e., $D$ is big. Since contractions have connected fibers, $\pi$ is birational. If $X$ preserves a rational fibration, we are done. Otherwise, Proposition~\ref{prop.big}(c) tells us that $Y$ is rational over $\QQbar$, which is not possible since $X$ has Kodaira dimension 0. So, the Medvedev-Scanlon Conjecture for $\phi$ holds in this case.

\smallskip
{\bf Case 2:} $\dim(Y) = 2$. Here the Medvedev-Scanlon conjecture 
holds by Lemmas~\ref{l:fib-over-C} and~\ref{lem:Cantat}.

\smallskip
{\bf Case 3:} $\dim(Y) = 1$.
By Proposition~\ref{prop.big}(c), $Y \simeq \mathbb P^1$ (over $\QQbar$). 

Let $Z\subseteq \PP^1$ be the locus of points $t$ where 
the fiber $X_t$ is singular. Then $\bbar\phi(Z)=Z$. Since $Z$ is a finite 
set, after replacing $\phi$ by a further iterate, we can assume 
$\bbar\phi$ fixes $Z$ point-wise. By \cite[Theorem 0.2]{mfds-over-curves}, 
we know that $Z$ contains at least 3 points. It follows that $\bbar\phi$ 
is the identity since it fixes at least three points of $\PP^1$. 
In other words, there exists a rational function on $X$ which 
is invariant under some iterate of $\phi$, a contradiction.
\end{proof}


\begin{thebibliography}{EHKV01}

\bibitem[AC08]{A-C}
E. Amerik and F. Campana, \emph{Fibrations m\'{e}romorphes sur certain vari\'{e}t\'{e}s \`{a} fibr\`{e} canonique trivial}, Pure
Appl. Math. Quart. \textbf{4} (2008), 1--37.


\bibitem[Bea96]{beauville-surfaces} A.~Beauville, \emph{Complex algebraic surfaces}, Second edition, LMS Student Texts 34, Cambridge University Press 1996.

\bibitem[Bea83]{BB-decomp} A.~Beauville, \emph{Some remarks on K\"ahler manifolds with $c_1=0$}, Classification of algebraic and analytic manifolds (Katata, 1982), 1--26, Progr. Math. \textbf{39}, Birkh\"auser Boston, Boston, MA, 1983.

\bibitem[BGR]{dynamical-Rosenlicht} J.~P.~Bell, D.~Ghioca, and Z.~Reichstein, 
\emph{On a dynamical version of a theorem of Rosenlicht}, to appear in the Ann. Sc. Norm. Sup. Pisa.


\bibitem[BGT15]{BGT-2} J.~P.~Bell, D.~Ghioca, and T.~J.~Tucker, \emph{Applications of p-adic analysis for bounding periods for subvarieties under \'etale maps}, Int. Math. Res. Not. IMRN \textbf{2015} (2015), no.~11, 3576--3597.

\bibitem[BGT16]{book}
J.~P.~Bell, D.~Ghioca, and T.~J.~Tucker, \emph{The dynamical Mordell-Lang conjecture},  Mathematical Surveys and Monographs \textbf{210}, American Mathematical Society, Providence, RI, 2016. xiii+280 pp.

\bibitem[BRS10]{Dixmier} 
J.~P.~Bell, D.~Rogalski, and S.~J.~Sierra, \emph{The Dixmier-Moeglin equivalence for twisted homogeneous coordinate rings}, Israel J. Math. \textbf{180} (2010), 461--507. 

\bibitem[Fal94]{Faltings} 
G. Faltings, \emph{The general case of S. Lang's conjecture}, Barsotti Symposium in Algebraic
Geometry (Abano Terme, 1991), Perspect. Math., no. 15, Academic Press, San Diego,
CA, 1994, pp. 175--182.

\bibitem[Fuj78]{fujiki-lieberman} A.~Fujiki, \emph{On automorphism groups of compact K\"ahler manifolds}, Invent. Math. \textbf{44} (1978), no.~3, 225--258.

\bibitem[GS]{MS-ab-var}
D.~Ghioca and T.~Scanlon, \emph{Density of orbits of endomorphisms of abelian varieties}, to appear in the Tran. Amer. Math. Soc. 

\bibitem[Kaw92]{abundance-3folds} Y.~Kawamata, \emph{Abundance theorem for minimal threefolds}, Invent. Math. \textbf{108} (1992), no.~2, 229--246.

\bibitem[Ki10]{wild-aut-thesis} 
A.~Kirson, Wild automorphisms of varieties with Kodaira dimension 0, 
Ann. Univ. Ferrara Sez. VII Sci. Mat. {\bf 56} (2010), 
no.~2, 327--333. 

\bibitem[KM98]{kollar-mori} 
J.~Kollar and S.~Mori, \emph{Birational geometry of algebraic varieties}, with the collaboration of C. H. Clemens and A. Corti. Translated from the 1998 Japanese original. Cambridge Tracts in Mathematics, \textbf{134}. Cambridge University Press, Cambridge, 1998. viii+254 pp.

\bibitem[Laz13]{beyond-K} V.~Lazi\'c, \emph{Around and beyond the canonical class}. Birational geometry, rational curves, and arithmetic, 171--203, Springer, New York, 2013. 

\bibitem[LOP]{CY-semiample} 
V. Lazic, K. Oguiso, and T. Peternell, \emph{Nef line bundles 
on Calabi-Yau thereefolds, I}, preprint, 
\url{https://arxiv.org/pdf/1601.01273.pdf}

\bibitem[MS14]{MS}
A.~Medvedev and T.~Scanlon, \emph{Invariant varieties for polynomial dynamical systems}, Ann. Math. \textbf{179}(2014), 81--177. 


\bibitem[Miy87]{Miyaoka} Y.~Miyaoka, \emph{The Chern classes and Kodaira dimension of a minimal variety}. Algebraic geometry, Sendai, 1985, 449--476, Adv. Stud. Pure Math., \textbf{10}, 1987.

\bibitem[Ogu93]{c2-fibration-types} K.~Oguiso. \emph{On algebraic fiber space structures on a Calabi-Yau 3-fold}. Internat. J. Math., 4(3):439--465, 1993. With an appendix by Noboru Nakayama.

\bibitem[Ogu01]{semi-ampleness-conj} 
K. Oguiso, \emph{On the finiteness of fiber-space structures on a Calabi-Yau 3-fold}, Algebraic geometry, \textbf{11}, J. Math. Sci. (New York) \textbf{106} (2001), no.~5, 3320--3335.

\bibitem[OS01]{CY-quot-type} K.~Oguiso and J.~Sakurai, \emph{Calabi-Yau threefolds of quotient type}, Asian J. Math. \textbf{5} (2001), no.~1, 43--77. 

\bibitem[Ols12]{martin-G-torsors} M.~Olsson, \emph{Integral models for moduli spaces of G-torsors}, Ann. Inst. Fourier (Grenoble) \textbf{62} (2012), no.~4, 1483--1549. 

\bibitem[Ro56]{rosenlicht}
M. Rosenlicht, Some basic theorems on algebraic groups, Amer. J. Math. {\bf 78} (1956), 401--443. MR0082183 
 
\bibitem[StPrj]{stacks-project} The Stacks~Project Authors, \emph{Stacks Project}, \url{http://math.columbia.edu/algebraic_geometry/stacks-git}.

\bibitem[Uen75]{ueno} K.~Ueno, \emph{Classification theory of algebraic varieties and compact complex spaces}, Springer \textbf{439} (1975).

\bibitem[VZ01]{mfds-over-curves} E.~Viehweg and K.~Zuo, \emph{On the isotriviality of families of projective manifolds over curves} J. Algebraic Geom. \textbf{10} (2001), no.~4, 781--799.

\bibitem[Xie15]{xie} J.~Xie, \emph{Periodic points of birational transformations on projective surfaces}, Duke Math. J. \textbf{164} (2015), no.~5, 903--932. 

\bibitem[Xie]{xie-2} J.~Xie, \emph{The existence of Zariski dense orbits for polynomial endomorphisms on the affine plane}, 2015, 17 pages, arXiv:1510.07684.

\bibitem[Zha06]{Zhang}
S.~Zhang, \emph{Distributions in algebraic dynamics}, Surveys in differential geometry. Vol. X, Surv. Differ.
Geom., vol. 10, Int. Press, Somerville, MA, 2006, pp. 381--430.

\end{thebibliography}
\end{document}